\definecolor{vegasgold}{rgb}{0.77, 0.7, 0.35}
\definecolor{darkgoldenrod}{rgb}{0.72, 0.53, 0.04}
\definecolor{gold(metallic)}{rgb}{0.83, 0.69, 0.22}
\newtheorem{lthm}{Theorem}
\DeclareFontFamily{U}{wncy}{}
\DeclareFontShape{U}{wncy}{m}{n}{<->wncyr10}{}
\DeclareSymbolFont{mcy}{U}{wncy}{m}{n}
\DeclareMathSymbol{\Sh}{\mathord}{mcy}{"58}
\newtheorem{theorem}{Theorem}[section]
\newtheorem{lemma}[theorem]{Lemma}
\newtheorem*{theorem*}{Theorem}
\newtheorem*{ass*}{Assumption}
\newtheorem{definition}[theorem]{Definition}
\newcommand{\Frac}{\mathrm{Frac}}
\newcommand{\Z}{\mathbb{Z}}
\newcommand{\Q}{\mathbb{Q}}
\newcommand{\cO}{\mathcal{O}}
\newcommand{\op}[1]{\operatorname{#1}}
\newcommand\mtx[4] { \left( {\begin{array}{cc}
 #1 & #2 \\
 #3 & #4 \\
 \end{array} } \right)}
 \DeclareMathSymbol{\sha}{\mathord}{mcy}{"58}
\newcommand{\mylabel}[2]{#2\def\@currentlabel{#2}\label{#1}}
\numberwithin{equation}{section}
\title[class groups of cubic orders with prescribed shape]{On the average $2$-torsion in class groups and narrow class groups of cubic orders with prescribed shape}
\author[A.~Ray]{Anwesh Ray\, \orcidlink{0000-0001-6946-1559}}
\address[Ray]{Chennai Mathematical Institute, H1, SIPCOT IT Park, Kelambakkam, Siruseri, Tamil Nadu 603103, India}
\email{anwesh@cmi.ac.in}
\begin{document}

\maketitle

\begin{abstract}
We study the distribution of $2$-torsion in class groups and narrow class groups
of cubic fields and cubic orders subject to prescribed shape conditions. The
\emph{shape} of a cubic order in a number field is a natural geometric invariant that takes values in the modular
surface $\mathbb{H}/\operatorname{GL}_2(\mathbb{Z})$. Fix a subset $W$ of the modular surface with positive hyperbolic measure whose boundary has measure $0$. Refining the methods of Bhargava and Varma, we prove that
among cubic fields with shape in $W$, the average size of the $2$-torsion subgroup
of the class group is $5/4$ for totally real fields and $3/2$ for complex fields,
while the average size of the $2$-torsion subgroup of the narrow class group for
totally real cubic fields is $2$. Similar results are obtained for cubic orders satisfying prescribed local conditions at all primes.
\end{abstract}

\section{Introduction}
\subsection{Motivation and historical context} The arithmetic statistics of number fields have advanced significantly in recent
years, largely motivated by the Cohen--Lenstra heuristics, which predict the
distribution of $p$-parts of class groups of number fields of fixed degree,
ordered by discriminant. Till date the only cases for which these conjectures are proven is when $p=3$ and $K$ varies over quadratic number fields and when $p=2$ and $K$ varies over cubic number fields. Davenport and Heilbronn \cite{davenportheilbronn} show that as $K$ varies over real (resp. imaginary) quadratic fields, the average size of $\op{Cl}_3(K)$ is $4/3$ (resp. $2$). On the other hand, Bhargava \cite{bhargavequartic} showed for totally real (resp. complex) cubic number fields $K$, the average size of $\op{Cl}_2(K)$ is $5/4$ (resp. $3/2$). These results are driven in large part by the development of powerful parametrization techniques originating in the work of Delone–Faddeev \cite{DeloneFaddev} and their far-reaching generalizations due to Bhargava \cite{bhargavahighercomplaws2}.
\par Alongside these developments, there has been increasing interest in finer
geometric invariants of number fields. One such invariant is the \emph{shape} of
a number field, defined using the Minkowski embedding of its ring of integers.
Equipping $K\otimes_{\mathbb{Q}}\mathbb{R}$ with the trace form, the image of
$\mathcal{O}_K$ becomes a lattice containing the vector corresponding to $1$.
Projecting orthogonally to this vector yields a rank-$(n-1)$ lattice, whose
equivalence class up to scaling and orthogonal transformations defines the
shape. The space of all such shapes may be identified with the double coset
space
\[
\mathcal{S}_{n-1}
=
\operatorname{GL}_{n-1}(\mathbb{Z})
\backslash
\operatorname{GL}_{n-1}(\mathbb{R})
/
\operatorname{GO}_{n-1}(\mathbb{R}),
\]
and carries a natural measure induced from Haar measure. Distribution questions for shapes were initiated by Terr \cite{Terr97}, who proved that the
shapes of cubic fields are equidistributed in $\mathcal{S}_2$. In this case,
$\mathcal{S}_2$ may be identified with the modular surface
$\mathbb{H}/\op{GL}_2(\mathbb{Z})$, where $\mathbb{H}$ is the complex upper half plane. Equidistribution holds
with respect to the hyperbolic measure $y^{-2}dxdy$. More recently, Bhargava and Harron \cite{bhargavaharron}
established analogous equidistribution results for $S_n$-number fields of
degrees $n=4,5$.
\subsection{Main results}
In this work, we combine these perspectives by studying averages of $2$-torsion
in class groups and narrow class groups of cubic orders subject to prescribed
shape conditions. We show that imposing such geometric restrictions does not
alter the limiting averages. The expected size of the relevant $2$-torsion
groups of cubic fields with shape contained in a suitably nice subset $W\subseteq \mathcal{S}_2$ remains coincides with the averages obtained by Bhargava and Varma \cite{BhargavaVarma}. The following is a refinement of \cite[Theorem 1]{BhargavaVarma}.
\begin{lthm}\label{thm a}
    Let $W \subset \mathcal{S}_2$ be a measurable subset with positive measure whose boundary is of measure
zero. Fix any finite set of primes and impose arbitrary
local conditions at these primes. When cubic fields with shape lying in $W$ and
satisfying these local conditions are ordered by absolute discriminant, the
following statements hold:
\begin{itemize}
\item[(a)] Among totally real cubic fields, the average size of the
$2$-torsion subgroup of the class group is $5/4$.
\item[(b)] Among complex cubic fields, the average size of the
$2$-torsion subgroup of the class group is $3/2$.
\item[(c)] Among totally real cubic fields, the average size of the
$2$-torsion subgroup of the narrow class group is $2$.
\end{itemize}

\end{lthm}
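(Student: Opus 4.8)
The plan is to refine the orbit-counting arguments of Bhargava and Bhargava--Varma by tracking the shape as a purely archimedean invariant throughout the geometry-of-numbers count, and then to show that imposing the shape condition multiplies the numerator and denominator of each average by the same factor. Recall that cubic orders are parametrized by $\mathrm{GL}_2(\mathbb{Z})$-orbits on the lattice $V_{\mathbb{Z}}$ of integral binary cubic forms (Delone--Faddeev), and that $2$-torsion elements of class groups, together with the sign data needed for the narrow class group, are parametrized by $\mathrm{GL}_2(\mathbb{Z}) \times \mathrm{SL}_3(\mathbb{Z})$-orbits on the lattice $M_{\mathbb{Z}} = \mathbb{Z}^2 \otimes \mathrm{Sym}^2 \mathbb{Z}^3$ of pairs of integral ternary quadratic forms, via the resolvent map sending such a pair to its binary cubic resolvent $f$. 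The averages in Theorem~\ref{thm a} of \cite{BhargavaVarma} arise as the ratio of the leading term in the count of $M_{\mathbb{Z}}$-orbits (the numerator) to that of $V_{\mathbb{Z}}$-orbits (the denominator), each taken over forms of bounded discriminant and fixed signature.

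The key structural input is that the shape of the cubic ring $R_f$ attached to a binary cubic form $f$ is a covariant of $f$: up to scaling and orthogonal equivalence, $\mathrm{sh}(R_f)$ is the $\mathrm{GL}_2(\mathbb{Z})$-class of the Hessian quadratic form $H_f$, which is $\mathrm{GL}_2(\mathbb{R})$-equivariant in the sense underlying the equidistribution results of Terr \cite{Terr97} and Bhargava--Harron \cite{bhargavaharron}. Hence the shape map $f \mapsto \mathrm{sh}(R_f) \in \mathcal{S}_2$ is a function of the archimedean data of $f$ alone, and the condition $\mathrm{sh} \in W$ cuts out a $\mathrm{GL}_2(\mathbb{R})$-compatible region $\mathcal{R}_W \subset V_{\mathbb{R}}$, together with its pullback under the resolvent in $M_{\mathbb{R}}$. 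In the averaged fundamental-domain description of Bhargava, the coordinate recording the shape is exactly the $\mathbb{H}$-coordinate of the $\mathrm{GL}_2(\mathbb{R})$-part, so intersecting the fundamental domain with $\mathcal{R}_W$ amounts to restricting this coordinate to lie over $W$. Since $\partial W$ has measure zero, the indicator $\mathbf{1}_W$ is Riemann-integrable on $\mathcal{S}_2$; approximating it above and below by continuous functions shows that the restricted sub-domain has volume $\mu(W)/\mu(\mathcal{S}_2)$ times the total, with boundary effects contributing only to the error.

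With this in hand I would rerun the Bhargava--Varma count verbatim, replacing every integral over the fundamental domain by the integral over its intersection with $\mathcal{R}_W$. Because the shape depends only on the resolvent --- that is, on the base cubic ring and not on the fibral $2$-torsion data --- the restriction multiplies both the numerator and the denominator by the same archimedean density $\mu(W)/\mu(\mathcal{S}_2)$, while leaving the finite-prime local densities (including those imposed by the prescribed local conditions) unchanged, since these are independent of the archimedean shape. Taking the ratio, this factor cancels and the averages coincide with those of \cite{BhargavaVarma}, namely $5/4$, $3/2$, and $2$ in cases (a), (b), and (c). The narrow class group in case (c) uses the sign-refined orbit count of Bhargava--Varma, but the shape restriction interacts with it identically, since the archimedean sign data and the shape are both functions of the real orbit of the resolvent and fiber over the same base.

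The main obstacle will be establishing uniformity of the geometry-of-numbers estimates across the shape region $\mathcal{R}_W$. Bhargava's count separates a main body from the cusp of the fundamental domain and relies on delicate estimates showing that reducible and non-maximal points contribute only to lower-order terms; one must check that these estimates survive intersection with $\mathcal{R}_W$ and, crucially, that the measure-zero set $\partial W$ does not concentrate near the cusp in a manner that spoils the error terms. Likewise, the maximality sieve and the sieve enforcing the finite local conditions must be shown to converge uniformly over $\mathcal{R}_W$, so that the archimedean and non-archimedean densities genuinely decouple. I expect this decoupling, combined with the measure-zero boundary hypothesis, to be precisely what is needed to carry the Bhargava--Varma asymptotics through with the factor $\mu(W)/\mu(\mathcal{S}_2)$ inserted in both counts, and hence cancelling in the average.
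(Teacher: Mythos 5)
Your proposal is correct and follows essentially the same route as the paper: both arguments track the shape through the $\mathrm{GL}_2(\mathbb{R})$-component of the archimedean fundamental domain in the Bhargava parametrization by pairs of ternary quadratic forms, show that restricting to $\mathrm{sh}\in W$ scales both the orbit count (numerator) and the cubic-order count (denominator) by the common factor $\mu(W)/\mu(\mathcal{S}_2)$, and conclude that this factor cancels so the Bhargava--Varma averages $5/4$, $3/2$, $2$ persist. The uniformity and sieve issues you flag are exactly the ones the paper disposes of by invoking the corresponding estimates of Bhargava--Harron.
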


We also obtain results for all cubic orders. Let $(\Sigma_p)$ be an \emph{acceptable collection} of local specifications as in Definition \ref{acceptable Sigma}. Given an order $\cO$ in a cubic number field $K$, $\mathcal{I}(\cO)$ be the group of invertible fractional ideals of $\cO$ and $\mathcal{P}(\mathcal{O})\subseteq \mathcal{I}(\cO)$ be the subgroup of principal fractional ideals. Denote by $\mathcal{P}^+(\cO)
\subseteq \mathcal{P}(\mathcal{O})$ the subgroup of totally positive principal fractional ideals. Then the class group (resp. narrow class group) of $\cO$ is the quotient $\op{Cl}(\cO):=\mathcal{I}(\cO)/\mathcal{P}(\mathcal{O})$ (resp. $\op{Cl}^+(\cO):=\mathcal{I}(\cO)/\mathcal{P}^+(\mathcal{O})$). Given a natural number $n$, denote by $\op{Cl}_n(\cO)$ (resp. $\mathcal{I}_n(\cO)$) the $n$-torsion subgroup of $\op{Cl}(\cO)$ (resp. $\mathcal{I}(\cO)$). We have the following refinement of \cite[Theorem 2]{BhargavaVarma}.

\begin{lthm}\label{thm b}
    Let $(\Sigma_p)$ be an acceptable collection of local specifications, and let
$\Sigma$ denote the set of isomorphism classes of cubic orders
$\mathcal{O}$ such that $\mathcal{O}\otimes\mathbb{Z}_p \in \Sigma_p$ for all
primes $p$. Let $W \subset \mathcal{S}_2$ be a measurable subset with positive measure and whose boundary has
measure zero. When cubic orders $\mathcal{O}\in\Sigma$ whose shapes
lie in $W$ are ordered by absolute discriminant, the following hold:
\begin{itemize}
\item[(a)] For totally real cubic orders $\mathcal{O}$, the average value of
\[
\left|\mathrm{Cl}_{2}(\mathcal{O})\right|
-\tfrac{1}{4}\left|\mathcal{I}_{2}(\mathcal{O})\right|
\]
is $1$.
\item[(b)] For complex cubic orders $\mathcal{O}$, the average value of
\[
\left|\mathrm{Cl}_{2}(\mathcal{O})\right|
-\tfrac{1}{2}\left|\mathcal{I}_{2}(\mathcal{O})\right|
=
\left|\mathrm{Cl}_{2}^{+}(\mathcal{O})\right|
-\tfrac{1}{2}\left|\mathcal{I}_{2}(\mathcal{O})\right|
\]
is $1$.
\item[(c)] For totally real cubic orders $\mathcal{O}$, the average value of
\[
\left|\mathrm{Cl}_{2}^{+}(\mathcal{O})\right|
-\left|\mathcal{I}_{2}(\mathcal{O})\right|
\]
is $1$.
\end{itemize}

\end{lthm}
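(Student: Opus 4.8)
The plan is to reduce Theorem \ref{thm b} to the corresponding unrestricted statement, \cite[Theorem 2]{BhargavaVarma}, by showing that the shape condition $\mathrm{sh}(\cO) \in W$ selects, in the limit, a proportion $\mu(W)/\mu(\cS_2)$ of the relevant objects, and that this same proportion governs both the $2$-torsion-weighted counts (the quantities whose averages we want) and the plain count of orders, so that it cancels in the ratio defining each average.

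First I would recall the two parametrizations underlying the Bhargava--Varma argument. Cubic orders $\cO$ are in bijection with $\op{GL}_2(\Z)$-orbits of integral binary cubic forms $f$ (Delone--Faddeev), and the $2$-torsion data entering $\op{Cl}_2(\cO)$, $\op{Cl}_2^+(\cO)$, and $\cI_2(\cO)$ are parametrized by orbits of pairs of integral ternary quadratic forms $(A,B) \in \Z^2 \otimes \mathrm{Sym}^2\Z^3$ whose resolvent binary cubic form recovers $f$. The key geometric input is that the shape $\mathrm{sh}(\cO)$ is a covariant of the real binary cubic form: passing to $\cO \otimes \mathbb{R}$, one reads off the projection of the trace form off the identity as a positive-definite binary quadratic form, well-defined up to similarity, hence a point of $\HH/\op{GL}_2(\Z) = \cS_2$. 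I would record that this map is $\op{GL}_2(\Z)$-invariant and, over $\mathbb{R}$, intertwines the $\op{GL}_2(\mathbb{R})$-action on forms with the action of $\op{GL}_2(\mathbb{R})$ on $\HH$ by fractional linear transformations; this is exactly the equivariance that drives Terr's equidistribution theorem \cite{Terr97}.

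Next I would insert the shape condition into the geometry-of-numbers count. Since $\mathrm{sh}$ depends only on the resolvent, imposing $\mathrm{sh} \in W$ cuts the fundamental domain $\mathcal{F}$ (for the group acting on the space of pairs of ternary quadratics) down to a sub-region $\mathcal{F}_W$ respecting the $\op{GL}_2$-orbit directions. Disintegrating the archimedean volume along the shape fibration, the equivariance identifies the pushforward of the counting measure to $\cS_2$ with a constant multiple of the hyperbolic measure $y^{-2}\,dx\,dy$; consequently the main term of the $W$-restricted count is $\mu(W)/\mu(\cS_2)$ times the unrestricted main term, for both the weighted count and the count of orders. The hypothesis that $\partial W$ has measure zero guarantees, via a standard nice-region lemma, that forms with shape near $\partial W$ contribute negligibly, so $W$, its interior, and its closure yield the same asymptotics. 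I would then run the cusp and reducibility analysis of \cite{BhargavaVarma} unchanged, verifying that the shape restriction does not affect the error estimates: the cuspidal and ``distinguished'' contributions producing the $-\tfrac14|\cI_2(\cO)|$, $-\tfrac12|\cI_2(\cO)|$, $-|\cI_2(\cO)|$ corrections and the constant $1$ arise from geometry near the cusp and are controlled uniformly in the shape variable.

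Finally I would carry out the sieve to the acceptable collection $(\Sigma_p)$. The shape is a purely archimedean condition, independent of the congruence conditions defining $\Sigma$, so the two restrictions decouple, and the uniformity estimate of \cite{BhargavaVarma} needed to sieve to $\Sigma$ and to pass to the limit over the acceptability tail is untouched by the archimedean cut. Assembling the pieces, the factor $\mu(W)/\mu(\cS_2)$ occurs identically in numerator and denominator of each average and cancels, recovering the values of \cite[Theorem 2]{BhargavaVarma}. The main obstacle I anticipate is establishing the shape-equidistribution for the $2$-torsion-weighted family with enough uniformity to survive division: one must show that the count of pairs $(A,B)$ lying over a fixed binary cubic, governed by shape-independent local masses, equidistributes in shape exactly as the unweighted count does, and that the non-generic loci carrying the corrective terms do not concentrate on any positive-measure subset of $\cS_2$, so that the equidistribution holds with respect to the \emph{same} hyperbolic measure in numerator and denominator.
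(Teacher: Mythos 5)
Your proposal follows essentially the same route as the paper: establish that imposing $\mathrm{sh}\in W$ multiplies both the weighted count of $G_{\mathbb{Z}}$-orbits of pairs of ternary quadratic forms and the count of $\Sigma$-orders by the same factor $\mu(W)/\mu(\mathcal{S}_2)$ (via the $\operatorname{GL}_2$-equivariance of the shape, a volume computation on the fundamental domain, the measure-zero-boundary hypothesis, and a sieve to the acceptable collection), and then feed this into the exact Bhargava--Varma identities relating $N^{(i)}(S;X)$ to $\sum\bigl(c\,|\operatorname{Cl}_2(R)|-|\mathcal{I}_2(R)|\bigr)$ so that the factor cancels in the averages. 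The only cosmetic difference is that your anticipated ``main obstacle'' (that the corrective terms might not equidistribute in shape) is automatically resolved in the paper's setup, since those identities hold fiberwise over each cubic ring $R$ and the shape condition is a condition on $R$ alone.
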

\noindent We note that Theorem \ref{thm b} is simply a restatement of Theorem \ref{main thm of article}. These results have many interesting consequences for cubic fields with precibed shape in a set $W\subseteq \mathcal{S}_2$. For instance, the following is a refinement of \cite[Corollary 3]{bhargavaharron}.
\begin{lthm}\label{thm c}
    Let $W \subset \mathcal{S}_2$ be a measurable subset with positive measure and whose boundary has
measure zero. When cubic fields with shape lying in $W$ and satisfying these local conditions
are ordered by absolute discriminant, the following statements hold:
\begin{itemize}
\item[(1)] A positive proportion of totally real cubic fields have odd class
number; in fact, this proportion is at least $75\%$.
\item[(2)] A positive proportion of complex cubic fields have odd class number;
in fact, this proportion is at least $50\%$.
\end{itemize}
\end{lthm}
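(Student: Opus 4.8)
The plan is to deduce Theorem~\ref{thm c} directly from Theorem~\ref{thm a} by a Markov-type averaging argument, exploiting the fact that $|\op{Cl}_2(K)|$ is always a power of $2$. First I would record the elementary observation that for any cubic field $K$ one has $|\op{Cl}_2(K)| = 2^{\dim_{\mathbb{F}_2}\op{Cl}_2(K)}$, so that $|\op{Cl}_2(K)| = 1$ precisely when the class number $h(K)$ is odd, and $|\op{Cl}_2(K)| \geq 2$ otherwise. Consequently the pointwise lower bound
\[
|\op{Cl}_2(K)| \;\geq\; 1 + \mathbf{1}_{\{h(K)\ \mathrm{even}\}}
\]
holds for every $K$, where $\mathbf{1}$ denotes the indicator function.

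Next I would fix one of the two families --- totally real or complex cubic fields with shape in $W$ and satisfying the prescribed local conditions --- and for $X>0$ let $N(X)$ denote the number of such fields with $|\mathrm{disc}(K)| < X$, and let $N_{\mathrm{ev}}(X)$ denote the number of those among them with even class number. Summing the displayed inequality over the family and dividing by $N(X)$ gives
\[
\frac{1}{N(X)}\sum_{|\mathrm{disc}(K)|<X} |\op{Cl}_2(K)| \;\geq\; 1 + \frac{N_{\mathrm{ev}}(X)}{N(X)}.
\]
By Theorem~\ref{thm a}(a) the left-hand side converges to $5/4$ in the totally real case, and by Theorem~\ref{thm a}(b) it converges to $3/2$ in the complex case. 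Taking $\limsup_{X\to\infty}$ of both sides therefore yields $\limsup_{X} N_{\mathrm{ev}}(X)/N(X) \leq 1/4$ in the totally real case and $\leq 1/2$ in the complex case, so that the proportion of fields with odd class number satisfies $\liminf_{X}\bigl(1 - N_{\mathrm{ev}}(X)/N(X)\bigr) \geq 3/4$, respectively $\geq 1/2$. This gives parts (1) and (2).

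There is no serious obstacle here: the entire arithmetic content is already packaged in Theorem~\ref{thm a}, and the only point requiring care is that one must work with $\limsup$ and $\liminf$ rather than assuming that the limiting density of even-class-number fields exists a priori --- the inequality above is robust to this, since the average on the left genuinely converges. I would also remark, for orientation, that the same bookkeeping applied to the narrow class group average of Theorem~\ref{thm a}(c), namely $2$, yields only the vacuous bound $\liminf_X\bigl(1 - N_{\mathrm{ev}}^{+}(X)/N(X)\bigr) \geq 0$ for the proportion of totally real fields with odd \emph{narrow} class number; this explains why part (c) is not invoked in proving Theorem~\ref{thm c}.
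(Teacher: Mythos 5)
Your proposal is correct and is essentially the paper's argument: the paper simply defers to the proof of Corollary~3 in Bhargava--Varma, which is exactly this observation that $|\op{Cl}_2(K)|$ is a power of $2$, so an average of $5/4$ (resp.\ $3/2$) forces at most $1/4$ (resp.\ $1/2$) of the fields to have nontrivial $2$-torsion. Your version just writes out the details that the paper outsources to the citation.
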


\subsection{Organization}
The structure of the paper is as follows. In Section~2 we recall the notion of the shape of a lattice and of a number field, with particular emphasis on the cubic case. We also review the parametrizations of cubic rings and $2$-torsion ideal classes that will be used throughout the paper. In Section~3 we carry out the necessary volume computations, adapting the framework in \cite{bhargavaharron}. These calculations differ from those in \emph{loc. cit.} where the shape distribution is studied for quartic orders, while in our applications, the parameterization for $2$-torsion ideal classes in cubic orders is considered. The emphasis lies in the $\op{GL}_2$-action on the homogenous space $\mathbb{R}^2\otimes\op{Sym}^2(\mathbb{R}^3)$ as opposed to the $\op{GL}_3$-action. In Section~4 we incorporate local congruence conditions and establish equidistribution of shapes for acceptable families of cubic orders. Finally, we combine these results to prove our main theorems on the average size of $2$-torsion in class groups and narrow class groups under shape restrictions. Theorems \ref{thm a} and \ref{thm c} are proven at the end of this section.

\par We hope that these results further illustrate the robustness of arithmetic statistics under the imposition of natural geometric conditions, and provide additional evidence for the compatibility between algebraic and geometric aspects of number fields.

\section{Preliminary notions}

\subsection{Shapes of lattices}

The notion of the \emph{shape} of a full rank lattice in Euclidean space is an invariant which captures its intrinsic geometry up to the natural symmetries. To formalize this notion, fix a real vector space $V$ of dimension $r$ with ordered basis $e_1, \dots, e_r$. Assume that $V$ is equipped with the standard inner product defined by $\langle e_i, e_j\rangle =\delta_{i,j}$. The \emph{space of shapes of rank-$r$ lattices} is defined to be the double coset space:
\[
\mathcal{S}_r := \op{GL}_r(\mathbb{Z}) \backslash \op{GL}_r(\mathbb{R}) / \op{GO}_r(\mathbb{R}),
\]
where $\op{GO}_r(\mathbb{R})$ denotes the group of invertible linear transformations preserving $\langle \cdot, \cdot \rangle$ up to a positive scalar multiple. Let $\Lambda\subset V$ be a full rank lattice in $V$ and let $\{b_1,\dots,b_r\}$ be a chosen $\Z$-basis of $\Lambda$. We write $b_i=\sum_{j=1}^n m_{i,j} e_j$ and let $M:=(m_{i,j})\in\op{GL}_r(\mathbb{R})$. Then the class of $M$ in the quotient $\op{GL}_r(\mathbb{Z}) \backslash \op{GL}_r(\mathbb{R})$ depends only on $\Lambda$ and not on the chosen basis. The \emph{shape of $\Lambda$} is defined to be the class of $M$ in the double coset space $\mathcal{S}_r$ and is denoted by $\op{sh}(\Lambda)$. The space of shapes carries a natural measure $\mu_{\mathcal{S}_r}$ induced from Haar measure on $\op{GL}_r(\mathbb{R})$.

\par An equivalent description is obtained via Gram matrices. Let $\mathcal{P}_r$ denote the space of positive definite symmetric $r\times r$ real matrices. The group $\op{GL}_r(\mathbb{Z})$ acts on $\mathcal{P}_r$ by $M\cdot G :=M^{T} G M$ for $G\in \mathcal{P}_r$ and $M\in \op{GL}_r(\mathbb{Z})$. On the other hand,
while $\lambda\in \mathbb{R}^{\times}$ acts by $\lambda\cdot G:=\lambda^2G$. Writing
\[
\mathcal{H}_r := \mathcal{P}_r / \mathbb{R}^{\times}
 = \{\, G \in M_r(\mathbb{R}) \mid G = G^T,\; G>0,\; \det G = 1 \,\},
\]
there is a natural identification
\[
\op{GL}_r(\mathbb{R}) / \op{GO}_r(\mathbb{R}) \;\xrightarrow{\;\sim\;}\; \mathcal{H}_r,\]
mapping $M$ to $|\det M|^{-2/r} M^T M$. Thus we have an isomorphism
\[
\mathcal{S}_r \simeq \op{GL}_r(\mathbb{Z}) \backslash \mathcal{H}_r.
\]
Under this identification, the shape of a lattice $\Lambda$ is represented by the class of its Gram matrix defined by $\op{Gr}(\Lambda):=\left(\langle v_i, v_j\rangle \right)$, and the relation
\[
\op{Gr}(\lambda A\cdot \Lambda)=\lambda^2 A^T \op{Gr}(\Lambda) A
\]
shows that this class is independent of the choice of basis and of scaling.

\par The case $r=2$ admits a particularly transparent description. Writing a general element of $\mathcal{H}_2$ as $\begin{pmatrix} a & b \\ b & c \end{pmatrix}$ where $a>0$ and $ac-b^2=1$, one obtains an explicit $\op{GL}_2(\mathbb{R})$-equivariant identification of $\mathcal{H}_2$ with $\mathbb{H}$, the complex upper half-plane. Explicitly, the matrix above corresponds to the point $z=x+iy\in\mathbb{H}$ with
\[
x=\frac{b}{c}, \qquad y=\sqrt{\frac{a}{c}-x^2}.
\]
Under this correspondence, the action of $\op{GL}_2(\mathbb{Z})$ on $\mathcal{H}_2$ becomes the usual fractional linear action on $\mathbb{H}$, and a fundamental domain is given by
\[
\mathcal{F}_2=\{\, x+iy\in\mathbb{H} \mid x\in[0,1/2),\; y>0,\; x^2+y^2\ge 1 \,\}.
\]
The induced measure on $\mathcal{F}_2$ is the hyperbolic measure $\frac{dx\, dy}{y^2}$ coming from Haar measure on $\op{GL}_2(\mathbb{R})$.
\par Let $K$ be a number field of degree $n$ with ring of integers $\mathcal{O}_K$. Let $ \{1, \alpha_1, \dots, \alpha_{n-1}\} $ be an integral basis and let $ \{\sigma_i\}_{1 \leq i \leq n} $ denote the set of embeddings of $ K $ into $ \mathbb{C}$.  The Minkowski embedding  
\[
\begin{aligned}
    j: K &\to \mathbb{C}^{n} \\
    \alpha &\mapsto \big( \sigma_1(\alpha), \dots, \sigma_n(\alpha) \big)
\end{aligned}
\]
identifies $ K $ with an $ n $-dimensional real subspace of $ \mathbb{C}^n $, which we denote by $K_{\mathbb{R}}$. This real vector space is naturally endowed with the inner product induced by the trace form $\langle x,y\rangle=\op{Tr}_{K/\mathbb{Q}}(xy)$, and hence $j(\mathcal{O}_K)$ acquires a well-defined Gram matrix. Since the vector $j(1)$ spans a distinguished one-dimensional subspace, it is customary to define the \emph{shape of $K$} to be the shape of the orthogonal projection of $j(\mathcal{O}_K)$ onto the hyperplane perpendicular to $j(1)$. This yields a point of $\mathcal{S}_{n-1}$. More concretely, let $\cO_K^\perp$ denote the image of the map $\alpha^{\perp} := n\alpha - \operatorname{tr}(\alpha)$. The \emph{shape of $K$}, denoted $\op{sh}_K$, is defined to be the shape of the lattice $j(\cO_K^\perp)$. It is represented by the Gram-matrix, which is given by
Gram matrix is given as follows: 
\[\label{gram matrix computation}
\operatorname{Gr}\left(j\left(\mathcal{O}_{K}^{\perp}\right)\right) = \big( \langle j(\alpha_i^{\perp}), j(\alpha_j^{\perp}) \rangle \big).\]

For quadratic fields, this invariant is trivial, as all rank-$1$ lattices are homothetic. In contrast, for cubic fields the associated projected lattice has rank $2$, and hence the shape of a cubic field is represented by a point in the fundamental domain $\mathcal{F}_2 \subset \mathbb{H}$.
\subsection{Parameterization of order $2$ ideals in cubic orders}

Let $T$ be a principal ideal domain. A ring of rank $n$ over $T$ is defined to be commutative $T$-algebra for which the underlying $T$-module $(R,+)$ is isomorphic to the free module $T^n$. When $n=2,3,4$ or $5$ we say that $R$ is a quadratic, cubic, quartic or quintic ring over $T$ respectively. A ring $R$ of rank $n$ is said to be \emph{nondegenerate} if $\op{Disc}(R)\neq 0$. 

\par We first recall the classical parametrization of cubic rings due to Delone and Faddeev \cite{DeloneFaddev}, as later extended and reformulated by Gan, Gross, and Savin \cite[Proposition 4.2]{GGS}. Let $f(x,y)=ax^{3}+bx^{2}y+cxy^{2}+dy^{3}$ be a binary cubic form with coefficients in $T$ and let $\gamma\in \op{GL}_2(T)$. Then \(\op{GL}_{2}(T)\) acts on the space of binary cubic forms via the twisted action:
\[
(\gamma\cdot f)(x,y)=\det(\gamma)^{-1} f\big((x,y)\gamma\big).
\]

\begin{theorem}[Delone--Faddeev, Gan--Gross--Savin]
There is a canonical bijection between isomorphism classes of cubic rings $R$ over $T$ and
$\op{GL}_{2}(T)$-equivalence classes of binary cubic forms $f$ with coefficients in $T$.
Under this correspondence, let $R(f)$ be the cubic ring associated to a form $f$. The discriminant of $R(f)$ coincides with the
discriminant of $f$. In particular, \(R(f)\) is nondegenerate if and only if \(\op{Disc}(f)\neq 0\).
\end{theorem}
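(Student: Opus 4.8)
The plan is to follow the classical argument of Delone and Faddeev, organized around the notion of a \emph{normalized basis}. Since $T$ is a PID and $R$ is free of rank $3$, the quotient $R/(T\cdot 1)$ is free of rank $2$; choose lifts $\omega,\theta\in R$ of a $T$-basis. Replacing $\omega$ by $\omega-r$ and $\theta$ by $\theta-s$ for suitable $r,s\in T$, one may arrange that the product $\omega\theta$ lies in $T\cdot 1$, i.e. $\omega\theta=n$ for some $n\in T$. I will call such a basis $\{1,\omega,\theta\}$ \emph{normalized}, and the first step is to check that a normalized basis exists and that two normalized bases lifting the same basis of $R/(T\cdot 1)$ differ only by such a translation.

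Given a normalized basis, I would write out the full multiplication table
\[
\omega\theta = n, \qquad \omega^2 = m + b\omega - a\theta, \qquad \theta^2 = \ell + d\omega - c\theta,
\]
for structure constants in $T$, and then impose the associativity relations $(\omega\cdot\omega)\theta=\omega(\omega\theta)$ and $\omega(\theta\cdot\theta)=(\omega\theta)\theta$. A short computation shows these force $n=-ad$, $m=-ac$, $\ell=-bd$, so that the entire ring structure is encoded by the four coefficients $a,b,c,d$, which I assemble into the binary cubic form $f(x,y)=ax^{3}+bx^{2}y+cxy^{2}+dy^{3}$. Conversely, given any $f$, I would \emph{define} $R(f)$ to be the free $T$-module on $1,\omega,\theta$ equipped with the multiplication above (with $n,m,\ell$ determined by these relations), and verify directly that this product is commutative and associative, so that $R(f)$ is a cubic ring. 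This yields the two mutually inverse constructions.

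To obtain the bijection at the level of equivalence classes, I would analyze how the form changes under the remaining ambiguity in the choice of normalized basis. Translations $\omega\mapsto\omega-r$, $\theta\mapsto\theta-s$ leave $f$ unchanged after renormalization, while a change of $T$-basis of $R/(T\cdot 1)$ by $\gamma\in\op{GL}_2(T)$ replaces $(\omega,\theta)$ by $(\omega,\theta)\gamma$; tracking how the structure constants transform---paying attention to the rescaling of $n,m,\ell$ forced by renormalization---yields precisely the twisted action $(\gamma\cdot f)(x,y)=\det(\gamma)^{-1}f\big((x,y)\gamma\big)$. Thus isomorphism classes of cubic rings over $T$ correspond bijectively to $\op{GL}_2(T)$-orbits of binary cubic forms with coefficients in $T$.

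Finally, for the discriminant claim I would compute the Gram matrix of the trace form in the basis $\{1,\omega,\theta\}$ directly from the multiplication table, reading off the traces $\op{Tr}(\omega)$, $\op{Tr}(\theta)$, $\op{Tr}(\omega^2)$, and so on from the structure constants, and verify that its determinant equals $b^{2}c^{2}-4ac^{3}-4b^{3}d-27a^{2}d^{2}+18abcd=\op{Disc}(f)$; nondegeneracy of $R(f)$ then follows at once. I expect the main obstacle to be the bookkeeping in the equivariance step: correctly deriving the factor $\det(\gamma)^{-1}$ requires carefully coupling the linear substitution on $(\omega,\theta)$ with the subsequent renormalization that restores $\omega\theta\in T\cdot 1$, and it is precisely this coupling that forces the twisted rather than the naive $\op{GL}_2(T)$-action.
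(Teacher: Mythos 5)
Your proposal follows essentially the same route as the paper's sketch: choose a normal(ized) basis with $\omega\theta\in T$, read off the multiplication table, let associativity force $n=-ad$, $m=-ac$, $\ell=-bd$, and package the data into the binary cubic form, with the inverse construction going the other way. In fact your outline is somewhat more complete than the paper's, since you explicitly address the equivariance bookkeeping (how translations and the $\det(\gamma)^{-1}$ twist arise from renormalization) and the trace-form computation of the discriminant, both of which the paper passes over.
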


\begin{proof} We provide a sketch of the proof for the benefit of exposition. To describe the correspondence explicitly, let \(R\) be a cubic ring. Choosing a \(T\)-basis \(\langle 1,\omega,\theta\rangle\) for \(R\), and translating \(\omega\) and \(\theta\) by suitable integers, we may assume that \(\omega\theta\in T\). Such a basis is called \emph{normal}. With respect to a normal basis, multiplication in \(R\) is given by relations of the form
\[
\omega\theta = n, \qquad
\omega^{2} = m + b\omega - a\theta, \qquad
\theta^{2} = \ell + d\omega - c\theta,
\]
for integers \(a,b,c,d,\ell,m,n\). Associativity of multiplication imposes constraints on \(\ell,m,n\), and a direct calculation shows that these constants are uniquely determined by
\[
n=-ad, \qquad m=-ac, \qquad \ell=-bd.
\]
The coefficients \(a,b,c,d\) therefore determine the cubic ring \(R\) uniquely, and one associates to \(R\) the binary cubic form
\[
f(x,y)=ax^{3}+bx^{2}y+cxy^{2}+dy^{3}.
\]

Conversely, starting from a binary cubic form \(f(x,y)=ax^{3}+bx^{2}y+cxy^{2}+dy^{3}\), the above relations define a cubic ring \(R(f)\) with underlying \(T\)-module \(T\oplus T\omega\oplus T\theta\). The associativity relations ensure that this construction is well-defined and inverse to the assignment \(R\mapsto f\). A direct computation shows that the discriminant of the cubic ring \(R(f)\) coincides with the discriminant of the associated binary cubic form,
\[
\op{Disc}(R(f))=\op{Disc}(f)
= b^{2}c^{2}-4ac^{3}-4b^{3}d-27a^{2}d^{2}+18abcd.
\]
\end{proof}
\par
A key input in our arguments is Bhargava’s parametrization
\cite[Theorem~4]{bhargavahighercomplaws2} of ideal classes of order two in cubic
rings via orbits of pairs of ternary quadratic forms. We briefly recall the aspects of this
parametrization that are relevant for our work, following the notation of
\cite{BhargavaVarma}. Let $V_T := T^{2}\otimes \op{Sym}^{2} T^{3}$ be
the space of pairs of ternary quadratic forms over \(T\). Equivalently, \(V_T\)
may be identified with the space of pairs of symmetric \(3\times 3\) matrices
with entries in \(T\),
\[
(A,B)=
\left(
\begin{bmatrix}
a_{11} & a_{12} & a_{13}\\
a_{12} & a_{22} & a_{23}\\
a_{13} & a_{23} & a_{33}
\end{bmatrix},
\begin{bmatrix}
b_{11} & b_{12} & b_{13}\\
b_{12} & b_{22} & b_{23}\\
b_{13} & b_{23} & b_{33}
\end{bmatrix}
\right).
\]
\noindent There is a natural algebraic action of the group
\(G_T:=\op{GL}_2(T)\times \op{GL}_3(T)\) on \(V_T\). Explicitly, for
\(\gamma=\mtx{r}{s}{t}{u}\in \op{GL}_2(T)\), define
\[
\gamma^{*}:=\mtx{r}{-t}{-s}{u}.
\]
Then the action of \(\op{GL}_2(T)\) on \(V_T\) is given by
\[
\gamma\cdot (A,B) := (A,B)\gamma^{*} = (rA - sB,\,-tA + uB).
\]
On the other hand, the action of \(\op{GL}_3(T)\) is induced from its natural
action on the underlying module \(T^{3}\): for \(g\in \op{GL}_3(T)\),
\[
g\cdot (A,B) := (gAg^{t},\, gBg^{t}).
\]

These two actions are compatible, yielding a joint action of
\(\op{GL}_2(T)\times \op{GL}_3(T)\) on \(V_T\). Let \(\Delta\) denote the subgroup
of \(\op{GL}_2(T)\times \op{GL}_3(T)\) consisting of elements of the form
\[
T_\lambda :=
\left(
\begin{bmatrix}
\lambda^{-2} & 0\\
0 & \lambda^{-2}
\end{bmatrix},
\begin{bmatrix}
\lambda & 0 & 0\\
0 & \lambda & 0\\
0 & 0 & \lambda
\end{bmatrix}
\right),
\qquad \lambda\in T^{\times}.
\]
A direct verification shows that \(T_\lambda\) acts trivially on \(V_T\). It
follows that the action of \(\op{GL}_2(T)\times \op{GL}_3(T)\) descends to an
action of the quotient group $G_T/\Delta$. To any pair \((A,B)\in V_T\), one associates a binary cubic form $f_{(A,B)}(x,y) := \op{Det}(Ax - By)$.
The coefficients of \(f_{(A,B)}\) are polynomial invariants under the
\(\op{SL}_3\)-action, and the discriminant
\[
\op{Disc}(A,B) := \op{Disc}\bigl(f_{(A,B)}\bigr)
\]
is a homogeneous polynomial of degree \(12\) in the coefficients of \(A\) and
\(B\). This discriminant generates the ring of relative invariants for the
action of \(\op{SL}_2\times \op{SL}_3\). An element \((A,B)\in V_T\) is called
\emph{nondegenerate} if \(\op{Disc}(A,B)\neq 0\).

\par
Via the Delone--Faddeev--Gan--Gross--Savin correspondence, the binary cubic form
associated to a pair of ternary quadratic forms determines a cubic ring over
\(T\). This relationship admits a substantial refinement that incorporates ideal
classes of order two. To state it precisely, fix a set \(S\) of representatives
for the left action of \(T^{\times}\) on \(T\setminus\{0\}\). For an ideal \(I\) of a
cubic ring over \(T\), we denote by \(N_{S}(I)\) the distinguished generator in
\(S\) of the ideal norm of \(I\) viewed as a \(T\)-ideal. 

\par Let $\mathcal{S}_T$ be the set of equivalence classes of tuples
\[\left(R, (1, \omega, \theta), I, (\alpha_1, \alpha_2, \alpha_3),\delta\right)\] where:
\begin{itemize}
    \item $R$ is a nondegenerate cubic ring over $T$,
    \item $(1, \omega, \theta)$ is an ordered basis of $R$,
    \item $I$ is an
$R$-ideal that is free of rank $3$ as a $T$-module,
\item $(\alpha_1,\alpha_2, \alpha_3)$ is an ordered $T$ basis of $I$,
\item $\delta$ is an
invertible element of $R \otimes \Frac(T)$ such that $I^2 \subset (\delta)$ and $N_S(I)^2 = N(\delta)$.
\end{itemize}
Two such tuples
$(R,(1, \omega, \theta), I,(\alpha_1, \alpha_2, \alpha_3), \delta)$ and $(R',(1, \omega', \theta'),I',(\alpha_1', \alpha_2', \alpha_3'),\delta')$ are considered equivalent if there exists
a ring isomorphism $\phi \colon R \to R'$ mapping $(1, \omega, \theta)$ to $(1, \omega', \theta')$ and $(\alpha_1, \alpha_2, \alpha_3)$ to $(\alpha_1', \alpha_2', \alpha_3')$, together with an element
$\kappa \in R' \otimes \Frac(T)$ such that $I' = \kappa\,\phi(I)$ and
$\delta' = \kappa^2 \phi(\delta)$. Then $G_T$ acts naturally on $\mathcal{S}_T$. 

\begin{theorem}[Bhargava]
With the above notation, there is a natural $G_T$-equivariant bijection between the nondegenerate elements in $V_T$ and $\mathcal{S}_T$. The set of nondegenerate orbits for the action of
$G_T$ on $V_T$ is in canonical bijection with equivalence classes of triples
$(R,I,\delta)$, where $R$ is a nondegenerate cubic ring over $T$, $I$ is an
$R$-ideal that is free of rank three as a $T$-module, and $\delta$ is an
invertible element of $R \otimes \Frac(T)$ satisfying the conditions that
$I^2 \subset (\delta)$ and $N_S(I)^2 = N(\delta)$. Two such triples
$(R,I,\delta)$ and $(R',I',\delta')$ are considered equivalent if there exists
a ring isomorphism $\phi \colon R \to R'$ together with an element
$\kappa \in R' \otimes \Frac(T)$ such that $I' = \kappa\,\phi(I)$ and
$\delta' = \kappa^2 \phi(\delta)$.
\end{theorem}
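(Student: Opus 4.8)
The plan is to construct the bijection between nondegenerate $(A,B)\in V_T$ and the tuples comprising $\mathcal{S}_T$ in both directions, and then to verify $G_T$-equivariance, from which the statement about orbits and triples $(R,I,\delta)$ follows by forgetting the two chosen bases. I would begin with the more transparent direction, the passage from a tuple to a pair of ternary quadratic forms. Given $(R,(1,\omega,\theta),I,(\alpha_1,\alpha_2,\alpha_3),\delta)$, the condition $I^2\subseteq(\delta)$ guarantees that the symmetric bilinear form $\beta(v,w):=vw/\delta$ takes values in $R$ for $v,w\in I$, since $vw\in I^2\subseteq \delta R$. Composing $\beta$ with the $T$-linear projection $R\to R/(T\cdot 1)\cong T\bar\omega\oplus T\bar\theta$ yields a pair of symmetric $T$-valued bilinear forms on $I$; expressing these in the basis $(\alpha_1,\alpha_2,\alpha_3)$ and reading off the $\bar\omega$- and $\bar\theta$-components produces two symmetric $3\times 3$ matrices $A$ and $B$, i.e. an element of $V_T$. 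The first consistency check is with the cubic ring: a direct computation should show that $\det(Ax-By)$ recovers, up to the $\op{GL}_2(T)$-action, the binary cubic form attached to $R$ by Delone--Faddeev--Gan--Gross--Savin, so that $\op{Disc}(A,B)=\op{Disc}(R)$ and nondegeneracy is preserved.

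For the reverse map I would, starting from nondegenerate $(A,B)$, first form $f_{(A,B)}(x,y)=\det(Ax-By)$ and let $R=R(f_{(A,B)})$ be the associated cubic ring with its normal basis $(1,\omega,\theta)$. The task is then to manufacture the ideal $I$ together with $\delta$. Following Bhargava, I would take the rank-$3$ module $I:=T^{3}$ and define the multiplication-by-$\omega$ and multiplication-by-$\theta$ endomorphisms of $I$ by explicit formulas built from the adjugate matrices $\op{adj}(A)$ and $\op{adj}(B)$ and the mixed adjugate of the pencil $Ax-By$; these formulas are forced by requiring that $\beta(v,w)=vw/\delta$ reproduce $(A,B)$ under the construction of the previous paragraph. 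One must then verify that these endomorphisms commute and satisfy the multiplication table $\omega\theta=n$, $\omega^{2}=m+b\omega-a\theta$, $\theta^{2}=\ell+d\omega-c\theta$ of $R$, so that $I$ becomes a fractional $R$-ideal, free of rank three over $T$, and that $I^2\subseteq(\delta)$ and $N_S(I)^2=N(\delta)$, with $\delta$ pinned down up to the stated equivalence by the distinguished norm generator $N_S$.

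With both maps in hand I would check $G_T$-equivariance. The action $g\cdot(A,B)=(gAg^{t},gBg^{t})$ of $\op{GL}_3(T)$ is exactly the change of the $T$-basis $(\alpha_1,\alpha_2,\alpha_3)$ of $I$ by $g$, while the twisted action of $\gamma\in\op{GL}_2(T)$ through $\gamma^{*}$ corresponds to changing the ordered pair $(\omega,\theta)$ --- equivalently the projection $R\to R/(T\cdot 1)$ used to split off $A$ and $B$ --- together with the compatible rescaling $\delta\mapsto\det(\gamma)\,\delta$. The subgroup $\Delta$ of elements $T_\lambda$ acts trivially precisely because replacing $(I,\delta)$ by $(\lambda I,\lambda^{2}\delta)$ leaves $\beta$, and hence $(A,B)$, unchanged, matching the equivalence $I'=\kappa\,\phi(I)$, $\delta'=\kappa^{2}\phi(\delta)$. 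Since the two ordered bases $(1,\omega,\theta)$ and $(\alpha_1,\alpha_2,\alpha_3)$ are exactly the trivializations on which $G_T=\op{GL}_2(T)\times\op{GL}_3(T)$ acts, quotienting $V_T$ by $G_T$ amounts to discarding these bases, which yields the asserted bijection between nondegenerate $G_T$-orbits and equivalence classes of triples $(R,I,\delta)$.

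I expect the main obstacle to be the reverse construction: proving that the endomorphisms defined from the adjugates of $A$ and $B$ genuinely satisfy the $R$-multiplication table and endow $I$ with the structure of an honest $R$-ideal with the correct $\delta$, rather than merely a $T$-module. To control this I would first establish the correspondence after inverting the discriminant, i.e. over $\Frac(T)$, where nondegeneracy makes $R\otimes\Frac(T)$ an \'etale cubic algebra and the module and ideal structures become transparent; the associativity and commutativity identities are polynomial, so it suffices to verify them generically. The remaining content is then an integrality statement --- that the generically defined $R$-action preserves the lattice $I$ and that $I^{2}\subseteq(\delta)$ holds over $T$ --- which can be checked from the explicit adjugate formulas by clearing denominators using that the entries of $A$ and $B$ lie in $T$. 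Throughout, care is needed with the distinguished set $S$ of norm representatives to ensure that $\delta$ is well defined up to the stated equivalence, so that the assignment is a genuine bijection and not merely a surjection.
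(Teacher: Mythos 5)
The paper does not actually prove this statement: its ``proof'' is a one-line citation to Bhargava's higher composition laws paper (and to Theorem~11 of Bhargava--Varma), so there is no in-paper argument to compare against. That said, your outline does reproduce the strategy of the cited proof. The forward map you describe --- the form $\beta(v,w)=vw/\delta$, which lands in $R$ because $I^{2}\subseteq(\delta)$, reduced modulo $T\cdot 1$ and read off in the bases $(\omega,\theta)$ and $(\alpha_{1},\alpha_{2},\alpha_{3})$ --- is exactly Bhargava's construction, and your identification of the $\op{GL}_3$- and $\op{GL}_2$-actions with change of basis of $I$ and of $R/(T\cdot 1)$, with the subgroup $\Delta$ absorbed into the $\kappa$-equivalence, is the correct picture for why passing to $G_T$-orbits yields the triples $(R,I,\delta)$.

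As a standalone proof, however, the proposal stops at precisely the point where the theorem has its content. The existence half of the bijection --- that for \emph{every} nondegenerate $(A,B)$ the adjugate-type formulas define commuting endomorphisms of $T^{3}$ satisfying the multiplication table of $R(f_{(A,B)})$, that the resulting $I$ is an honest $R$-ideal with $I^{2}\subseteq(\delta)$ and $N_{S}(I)^{2}=N(\delta)$, and that the two constructions are mutually inverse --- is flagged as ``one must then verify'' and deferred. Your proposed remedy (verify the polynomial identities generically over $\Frac(T)$, where $R\otimes\Frac(T)$ is \'etale, then check integrality by clearing denominators) is a legitimate route, but the integrality of the $R$-action on $I$ and the correct normalization of $\delta$ are exactly the nontrivial content of Bhargava's several-page computation, so this remains a genuine gap rather than a detail. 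One further point to handle with care: on based tuples the $\op{GL}_2$-action should only change the normal basis $(\omega,\theta)$; your proposed accompanying rescaling $\delta\mapsto\det(\gamma)\,\delta$ has to be reconciled both with the constraint $N_{S}(I)^{2}=N(\delta)$ (a unit rescaling $u\delta$ multiplies $N(\delta)$ by $N(u)=u^{3}$) and with the $\det(\gamma)^{-1}$ twist in the Delone--Faddeev action on binary cubic forms; this bookkeeping is where the sign and determinant conventions of the statement are actually fixed.
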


\begin{proof}
The above result is proven in \cite{bhargavahighercomplaws2} (see also \cite[Theorem~11]{BhargavaVarma}).
\end{proof}
\par Given a non-degenerate pair $(A,B)\in V_{\mathbb{R}}$, the associated conics intersect in $4$ points in $\mathbb{P}^1(\mathbb{C})$. The action of $G_{\mathbb{R}}$ on $V_{\mathbb{R}}$ consists of $3$ nondegenerate orbits $V_{\mathbb{R}}^{(i)}$, where $i=0,1,2$. Here, $V_{\mathbb{R}}^{(i)}$ is the the $G_{\mathbb{R}}$ orbit consisting of nondegerate pairs $(A,B)\in V_{\mathbb{R}}$ for which the points of intersection consist of $i$ pairs of complex zeroes and $4-2i$ real zeroes. A pair $(A,B)\in V_{\Z}$ is said to be absolutely irreducible if the following conditions are satisfied:
\begin{itemize}
    \item $A$ and $B$ do not possess a common $\mathbb{Q}$-rational zero as conics in $\mathbb{P}^{2}$; and
    \item the binary cubic form $f(x, y)=\operatorname{Det}(A x-B y)$ is irreducible over $\mathbb{Q}$.
\end{itemize}
We set $V_\Z^{(i)}:=V_{\mathbb{R}}^{(i)}\cap V_{\Z}$. Consider an element $(A,B)\in V_\Z^{(i)}$ for which the associated binary cubic form is irreducible and let $(R, I, \delta)$ be the associated triple. The if $i=0$ or $2$, then $R$ is an order in a totally real cubic field. On the other hand, if $i=1$ then $R$ is an order in a complex cubic field. If $i=0$ (resp. $i=2$) then $\delta$ is (resp. is not) a totally positive element of $R\otimes \Q$. We refer to \cite[Lemma 21]{BhargavaVarma} and its proof for further details.
\subsection{Class groups and narrow class groups}
\par Consider pairs $(R, I, \delta)$ where $R$ is an order in an $S_3$ cubic number field. We note that such cubic rings make up $100\%$ of all orders in cubic fields. A triple $(R,I,\delta)$ is called \emph{projective} if $I$ is a projective
$R$-module, equivalently an invertible fractional ideal; in this case
$I^{2}=(\delta)$. For a fixed $S_{3}$-cubic order $R$, equivalence classes of
projective triples admit a natural composition law
\[
(R,I,\delta)\circ (R,I',\delta')=(R,II',\delta\delta'),
\]
which makes them into a group, denoted $H(R)$.

Let $U$ be the unit group of $R$, $U^{\mathrm{pos\,norm}}$ the subgroup of units
of positive norm, and $U^{2}$ the subgroup of square units. There is an exact
sequence
\[
1 \longrightarrow \frac{U^{\mathrm{pos\,norm}}}{U^{2}}
\longrightarrow H(R)
\longrightarrow \mathrm{Cl}_{2}(R)
\longrightarrow 1.
\]
If $R$ is an order in a totally real (resp. complex) $S_{3}$-cubic field then $|U^{\mathrm{pos\,norm}}/U^{2}|=4$ (resp. $=2$). Thus we find that if $R$ is an order in a totally real (resp. complex) $S_{3}$-cubic field then
$|H(R)|=4\,|\mathrm{Cl}_{2}(R)|$ (resp. $|H(R)|=2\,|\mathrm{Cl}_{2}(R)|$).

When $R$ is totally real, let $H^{+}(R)\subset H(R)$ be the subgroup of triples
$(R,I,\delta)$ with $\delta$ totally positive. Then, one has the relation
\[
|H^{+}(R)|=|\mathrm{Cl}_{2}^{+}(R)|
\]
cf. \cite[Lemma 15]{BhargavaVarma}.

\section{Volume computations}
\par Let $W\subseteq \mathcal{S}_2$ be a measurable subset whose boundary has measure zero. Given a subset $S$ of $V_{\Z}^{(i)}$ and a positive real number $X$, we denote by $N(S,X)$ the number of $\op{G}_{\Z}$-orbits of irreducible elements $x\in S$ such that $|\op{Disc}(x)|<X$. Given an irreducible $x\in S$ let $\op{sh}(x)$ be the shape of the associated cubic ring. Denote by $N_W(S,X)$ the number of $\op{G}_{\Z}$-orbits of irreducible elements $x\in S$ such that $|\op{Disc}(x)|<X$ and such that $\op{sh}(x)\in W$. 
\par Denote by $\op{GL}_n^{\pm1}$ the subgroup of $\op{GL}_n$ with determinant $+1$ or $-1$ and set $\mathbb{G}_m:=\op{GL}_1$. The space $V_{\mathbb{R}}$ with its action by $G_{\mathbb{R}}$ is also considered by Bhargava and Harron \cite{bhargavaharron}. The key difference between our setting and theirs is that we use irreducible orbits in $V_{\Z}/G_{\Z}$ to parameterize $2$-torsion ideal classes in cubic rings, while in \emph{loc. cit.} they parameterize quartic rings. We are interested the distribution of the associated cubic field which is captured by the action of the $\op{GL}_2$ component of $G_{\mathbb{R}}$. On the other hand in \emph{loc. cit.} the shape of the quartic field is studied, which is encoded by the action of the $\op{GL}_3$ component of $G_{\mathbb{R}}$. Following Bhargava and Harron, the action of $G_{\mathbb{R}}$ on $V_{\mathbb{R}}$ factors through that of \[G_{\mathbb{R}}^{\prime}:=\mathbb{G}_{m}(\mathbb{R}) \times \mathrm{GL}_{3}^{ \pm 1}(\mathbb{R}) \times \mathrm{GL}_{2}^{ \pm 1}(\mathbb{R})\] via the homomorphism $G_{\mathbb{R}}\rightarrow G_{\mathbb{R}}'$ which sends $(g_3,g_2)\in \op{GL}_3(\mathbb{R})\times \op{GL}_2(\mathbb{R})$ to 
\[\left(\left|\operatorname{det} g_{3}\right|^{2 /3}\left|\operatorname{det} g_{2}\right|^{1 /2}, g_{3}^{\prime}, g_{2}^{\prime}\right)\]
\noindent where $g_{i}^{\prime}$ is given by $g_{i}=\left|\operatorname{det} g_{i}\right|^{1 / i} g_{i}^{\prime}$. The kernel of the action of $G_{\mathbb{R}}^{\prime}$ on $V_{\mathbb{R}}$ consists of $4$ elements. Fix a vector $v^{(i)}\in V_{\mathbb{R}}^{(i)}$ for which \[\op{sh}(v^{(i)})=\mtx{1}{0}{0}{1}\in \mathcal{S}_2\text{ and }|\op{Disc}(v^{(i)})|=1.\] Note that for $\gamma=(\lambda, g_3, g_2)\in G_{\mathbb{R}}'$, we have that 
\[\op{sh}(\gamma\cdot v^{(i)})=g_2\cdot \op{sh}(v^{(i)})=g_2.\]
Let $\mathcal{F} \subset G_{\mathbb{R}}^{\prime}$ be a fundamental domain for the left action of $G_{\mathbb{Z}}^{\prime}$ on $G_{\mathbb{R}}^{\prime}$ and set 
\[\begin{split}
    &\mathcal{R}_{X}:=\left\{x \in \mathcal{F} v^{(i)}\mid |\operatorname{Disc}(x)|<X\right\},\\
    &\mathcal{R}_{X, W}:=\left\{x \in \mathcal{F} v^{(i)}\mid |\operatorname{Disc}(x)|<X\text{ and }\op{sh}(x)\in W\right\}.
\end{split}\]
Letting $4 n_{i}$ be the cardinality of the stabilizer in $G_{\mathbb{R}}^{\prime}$ of $v^{(i)} \in V_{\mathbb{R}}^{(i)}$, one has that\[
n_0 = 24, \qquad n_1 = 4, \qquad n_2 = 8.
\]
\begin{theorem}
    The the following assertions hold:
    \begin{enumerate}
        \item $N\left(V_{\mathbb{Z}}^{(i)} ; X\right)=\frac{1}{n_{i}} \operatorname{Vol}\left(\mathcal{R}_{X}\right)+o(X)=\frac{1}{n_{i}} \operatorname{Vol}\left(\mathcal{R}_{1}\right) \cdot X+o(X)$,
        \item $N\left(V_{\mathbb{Z}}^{(i)} ; X, W\right)=\left(1 / n_{i}\right) \operatorname{Vol}\left(\mathcal{R}_{X, W}\right)+o(X)=\left(1 / n_{i}\right) \operatorname{Vol}\left(\mathcal{R}_{1, W}\right) \cdot X+o(X)$.
    \end{enumerate}
\end{theorem}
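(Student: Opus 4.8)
The plan is to prove the theorem by Bhargava's averaging technique for counting lattice points in a fundamental domain, together with a cusp estimate guaranteeing that irreducible integral points in the unbounded part of the domain are negligible. Since assertion (1) is the special case $W=\mathcal{S}_2$ of assertion (2), I would prove (2) and read off (1). First I would record that, because the stabilizer of $v^{(i)}$ in $G_{\mathbb{R}}'$ has order $4n_i$ while the kernel of the $G_{\mathbb{R}}'$-action on $V_{\mathbb{R}}$ has order $4$, the set $\mathcal{F}v^{(i)}$ meets each $G_{\mathbb{Z}}$-orbit in $V_{\mathbb{R}}^{(i)}$ in exactly $n_i$ points, away from a lower-order locus of points with extra symmetry. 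Hence
\[
N(V_{\mathbb{Z}}^{(i)};X,W)=\tfrac{1}{n_i}\,\#\{x\in \mathcal{F}v^{(i)}\cap V_{\mathbb{Z}}^{(i)} : x \text{ irreducible},\ |\op{Disc}(x)|<X,\ \op{sh}(x)\in W\}+o(X),
\]
reducing the problem to estimating the number of integral points in $\mathcal{R}_{X,W}$.

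\emph{Averaging and the main term.} The set $\mathcal{F}$ is noncompact with cuspidal ends, so a naive geometry-of-numbers count fails. I would instead average the count over $h$ in a fixed compact set $G_0\subset G_{\mathbb{R}}'$ of positive measure, replacing $\mathcal{F}$ by the family $\{h\mathcal{F}\}$; since left $G_{\mathbb{Z}}'$-translation permutes fundamental domains, each member computes the same orbit count, and the averaging smooths the region near the cusp. On the bounded portion of $\mathcal{R}_{X,W}$ this lets me apply the classical lattice-point estimate of Davenport, equating the number of integral points in a suitably regular bounded region with its volume up to an error controlled by the volumes of lower-dimensional projections. This produces the main term $\tfrac{1}{n_i}\op{Vol}(\mathcal{R}_{X,W})$.

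\emph{The cusp: the main obstacle.} The hardest part will be showing that the irreducible integral points lying in the cuspidal region of $\mathcal{F}v^{(i)}$ contribute only $o(X)$. Using Iwasawa-type coordinates $\mathcal{F}=N'A'K\Lambda$ on $G_{\mathbb{R}}'$ and the $\op{GL}_3$-action on pairs of symmetric matrices, I would partition the cusp according to which matrix entries are forced to be small, and show that in each cuspidal range an integral pair whose distinguished entries vanish is necessarily reducible, in the sense that $A$ and $B$ share a $\mathbb{Q}$-rational zero or $f_{(A,B)}$ factors over $\mathbb{Q}$. Integrating the resulting bounds over the cusp and comparing with $X$ gives the required saving. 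This is the cubic analogue of the cusp analysis of Bhargava--Varma, adapted to the $\op{GL}_2$-weighting governing the shape rather than the $\op{GL}_3$-weighting used for quartic shapes in \cite{bhargavaharron}, which is precisely the point where our computation departs from \emph{loc. cit.}

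\emph{The shape restriction and the scaling.} Because $\op{sh}(\gamma\cdot v^{(i)})=g_2$ depends only on the $\op{GL}_2^{\pm1}$-component of $\gamma=(\lambda,g_3,g_2)$, the condition $\op{sh}(x)\in W$ is simply a restriction on that component; since $\partial W$ has measure zero, the indicator of $W$ is Riemann integrable, so boundary effects contribute $o(X)$ and the averaged count of $\mathcal{R}_{X,W}$ remains asymptotic to its volume. This is what lets the two steps above go through verbatim with the extra constraint. Finally, the homogeneity of $\op{Disc}$ of degree $12$ under the $\mathbb{G}_m$-scaling, together with the induced change of variables in Haar measure, yields $\op{Vol}(\mathcal{R}_{X,W})=X\cdot\op{Vol}(\mathcal{R}_{1,W})$, giving the second equality and completing the proof.
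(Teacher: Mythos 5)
Your proposal is correct and follows essentially the same route as the paper, which simply cites Theorems 4 and 5 of Bhargava--Harron: those results are proved by exactly the averaging-over-fundamental-domains method, Davenport's lemma, cusp estimates for irreducible points, Riemann integrability of $\chi_W$ coming from the measure-zero boundary, and the degree-$12$ homogeneity of the discriminant that you outline. The only difference is that you spell out the argument (including the correct observation that the shape condition constrains only the $\op{GL}_2^{\pm 1}$-component) where the paper defers entirely to the references.
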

\begin{proof}
    The first assertion is \cite[Theorem 4]{bhargavaharron}. The proof of the second assertion is identical to that of Theorem 5 of \emph{loc. cit.} and is therefore omitted.
\end{proof}

\begin{theorem}\label{volume quotient}
     For $n \in\{3,4,5\}$, we have
$$
\frac{\operatorname{Vol}\left(\mathcal{R}_{1, W}\right)}{\operatorname{Vol}\left(\mathcal{R}_{1}\right)}=\frac{\mu(W)}{\mu\left(\mathcal{S}_{2}\right)} .
$$
\end{theorem}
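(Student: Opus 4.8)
The plan is to realize both volumes as integrals over the fundamental domain $\mathcal{F}\subset G_{\mathbb{R}}'$ via the orbit map $\Phi\colon \gamma\mapsto \gamma\cdot v^{(i)}$, and then to disintegrate the resulting measure so that the discriminant and the shape are governed by two independent factors of $G_{\mathbb{R}}'$. I would focus on the cubic case $n=3$ relevant here, where the shape lives in $\mathcal{S}_2$ and is controlled by the $\operatorname{GL}_2$-factor; the general $n$ follows from the identical disintegration carried out in \cite{bhargavaharron}. Writing $\gamma=(\lambda,g_3,g_2)\in \mathbb{G}_m(\mathbb{R})\times\operatorname{GL}_3^{\pm1}(\mathbb{R})\times\operatorname{GL}_2^{\pm1}(\mathbb{R})$, the first step is to express the pullback under $\Phi$ of the Euclidean measure $dx$ on $V_{\mathbb{R}}$ as $J(\gamma)\,d\gamma$ for a Jacobian factor $J$ relative to Haar measure $d\gamma$ on $G_{\mathbb{R}}'$. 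Since the action of $g_3$ (resp.\ $g_2$) scales volume on $V_{\mathbb{R}}$ by $(\det g_3)^{8}$ (resp.\ $(\det g_2)^{6}$), and both determinants equal $\pm1$ on $\operatorname{GL}^{\pm1}$, the factor $J$ depends only on the scaling parameter $\lambda$. This is exactly the Jacobian already computed in \cite{bhargavaharron}, which I would invoke rather than recompute.

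Second, I would factor the integral. The lattice $G_{\mathbb{Z}}'$ is the image of $\operatorname{GL}_2(\mathbb{Z})\times\operatorname{GL}_3(\mathbb{Z})$, and because integral matrices have determinant $\pm1$, it maps into $\{1\}\times\operatorname{GL}_3(\mathbb{Z})\times\operatorname{GL}_2(\mathbb{Z})$ and acts trivially on the $\mathbb{G}_m$-factor. Hence one may take $\mathcal{F}=\mathcal{F}_\lambda\times\mathcal{F}_3\times\mathcal{F}_2$, a product of fundamental domains, with Haar measure factoring as $d^\times\lambda\,dg_3\,dg_2$. The two defining conditions now decouple: since $\operatorname{Disc}$ is a relative invariant for $\operatorname{SL}_2\times\operatorname{SL}_3$ and $g_2',g_3'$ have determinant $\pm1$, the quantity $|\operatorname{Disc}(\gamma\cdot v^{(i)})|$ is a function of $\lambda$ alone; and by the identity $\operatorname{sh}(\gamma\cdot v^{(i)})=g_2$ recorded in the excerpt, the shape is the class of $g_2$ in $\mathcal{S}_2$ and depends on $g_2$ alone. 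Consequently the constraint $|\operatorname{Disc}|<1$ carves out a region in $\mathcal{F}_\lambda$ only, while $\operatorname{sh}\in W$ carves out the preimage of $W$ in $\mathcal{F}_2$ only.

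With both factorizations in hand, $\operatorname{Vol}(\mathcal{R}_1)$ and $\operatorname{Vol}(\mathcal{R}_{1,W})$ split as products of a $\lambda$-integral, a $g_3$-integral over $\mathcal{F}_3$, and a $g_2$-integral. The $\lambda$- and $g_3$-integrals are identical on both sides and cancel in the ratio, leaving
\[
\frac{\operatorname{Vol}(\mathcal{R}_{1,W})}{\operatorname{Vol}(\mathcal{R}_1)}=\frac{\displaystyle\int_{\{g_2\in\mathcal{F}_2\,:\,[g_2]\in W\}}dg_2}{\displaystyle\int_{\mathcal{F}_2}dg_2}.
\]
By the definition of $\mu=\mu_{\mathcal{S}_2}$ as the pushforward of Haar measure under the projection $\operatorname{GL}_2^{\pm1}(\mathbb{R})\to\mathcal{S}_2=\operatorname{GL}_2(\mathbb{Z})\backslash\operatorname{GL}_2(\mathbb{R})/\operatorname{GO}_2(\mathbb{R})$, integrating out the compact fibre (a copy of $\operatorname{O}_2(\mathbb{R})$) identifies the numerator with a constant multiple of $\mu(W)$ and the denominator with the same constant multiple of $\mu(\mathcal{S}_2)$. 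The ratio is therefore $\mu(W)/\mu(\mathcal{S}_2)$, as claimed.

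The step I expect to require the most care is the first one: confirming that the Jacobian $J$ of the orbit map is genuinely independent of $g_2$ and $g_3$, so that it does not distort the $\mathcal{S}_2$-marginal, and then matching the pushforward of $dg_2$ to exactly $\mu_{\mathcal{S}_2}$ up to a normalizing constant. Since only the ratio is needed, every absolute normalization — the value of $J$, the volume of the $\operatorname{O}_2$-fibre, and the chosen normalization of Haar measure — cancels between numerator and denominator, which is precisely what makes the statement clean despite these choices.
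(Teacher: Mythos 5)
Your proposal follows essentially the same route as the paper: both convert the Euclidean volumes into integrals over $G_{\mathbb{R}}'$ weighted by the Jacobian of the orbit map (the $\lvert\operatorname{Disc}(g\cdot v^{(i)})\rvert$ factor, imported from \cite{bhargavaharron}), observe that the discriminant condition depends only on the $\mathbb{G}_m$-coordinate while the shape depends only on the $\operatorname{GL}_2^{\pm 1}$-coordinate, and cancel the $\lambda$- and $g_3$-integrals in the ratio to reduce to the pushforward of Haar measure on $\operatorname{GL}_2(\mathbb{Z})\backslash\operatorname{GL}_2^{\pm1}(\mathbb{R})/\operatorname{GO}_2^{\pm1}(\mathbb{R})$. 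The argument is correct and matches the paper's proof, merely spelling out the product decomposition of the fundamental domain in more detail.
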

\begin{proof}
     The proof of this result is similar to that of \cite[Theorem 7]{bhargavaharron}, the only difference lies in the fact that the shape is determined by the $\op{GL}_2$-action from the third coordinate of $G_{\mathbb{R}}'$ (and not the $\op{GL}_3$-action from the second coordinate). It follows from Proposition 12 of \emph{loc. cit.} that
\begin{equation*}
\frac{\operatorname{Vol}\left(\mathcal{R}_{1, W}\right)}{\operatorname{Vol}\left(\mathcal{R}_{1}\right)}=\frac{\int_{g \in G_{\mathbb{R}}^{\prime}} \chi_{\mathcal{R}_{1, W}}\left(g \cdot v^{(i)}\right)\left|\operatorname{Disc}\left(g \cdot v^{(i)}\right)\right| d g}{\int_{g \in G_{\mathbb{R}}^{\prime}} \chi_{\mathcal{R}_{1}}\left(g \cdot v^{(i)}\right)\left|\operatorname{Disc}\left(g \cdot v^{(i)}\right)\right| d g} .
\end{equation*}

If $g=\left(\lambda, g_3, g_2\right) \in G_{\mathbb{R}}^{\prime}$, then the first two coordinates $\lambda$ and $g_3$ do not affect the shape. Arguing as in \emph{loc. cit.}, we find that
$$
\frac{\operatorname{Vol}\left(\mathcal{R}_{1, W}\right)}{\operatorname{Vol}\left(\mathcal{R}_{1}\right)}=\frac{\int_{g \in \mathrm{GL}_{2}^{ \pm 1}(\mathbb{Z}) \backslash \mathrm{GL}_{2}^{ \pm 1}(\mathbb{R}) / \mathrm{GO}_{2}^{ \pm 1}(\mathbb{R})} \chi_{\mathcal{R}_{1, W}}\left(g \cdot v^{(i)}\right) d g \int_{k \in \mathrm{GO}_{2}^{ \pm 1}(\mathbb{R})} d k}{\int_{g \in \mathrm{GL}_{2}^{ \pm 1}(\mathbb{Z}) \backslash \mathrm{GL}_{2}^{ \pm 1}(\mathbb{R}) / \mathrm{GO}_{2}^{ \pm 1}(\mathbb{R})} d g \int_{k \in \mathrm{GO}_{2}^{ \pm 1}(\mathbb{R})} d k} =\frac{\mu(W)}{\mu\left(\mathcal{S}_{2}\right)}.
$$
\end{proof}

\section{Congruence conditions}

In this section we study local conditions at all primes $p$. We shall recall some results from \cite{BhargavaVarma} regarding the distribution of $2$-torsion ideals of cubic rings ordered according to discriminant. Let $S \subset V_{\mathbb{Z}}$ be a $G_{\mathbb{Z}}$-invariant subset defined by
finitely many congruence conditions, encoding prescribed local arithmetic
constraints such as maximality, splitting, or ramification. For each prime $p$,
its closure $S_p \subset V_{\mathbb{Z}_p}$ is a compact open subset stable under
the action of $G_{\mathbb{Z}_p}$. To $S_p$ one associates a local $p$-adic mass
\[
M_p(S)
=
\sum_{x \in G_{\mathbb{Z}_p}\backslash S_p}
\frac{1}{\operatorname{Disc}_p(x)\,|\operatorname{Stab}_{G_{\mathbb{Z}_p}}(x)|},
\]
which depends only on the local specification of $S$ at $p$; the resulting Euler
product converges since $S$ is defined by finitely many congruence conditions.

Let $N^{(i)}(S;X)$ denote the number of $G_{\mathbb{Z}}$-equivalence classes of
absolutely irreducible elements in $S \cap V_{\mathbb{Z}}^{(i)}$ with
$|\operatorname{Disc}(A,B)|<X$. Then \cite[Theorem 19]{BhargavaVarma} asserts that:
\[
\lim_{X\to\infty} \frac{N^{(i)}(S;X)}{X}
=
\frac{1}{2n_i}
\prod_p \left(\frac{p-1}{p}\, M_p(S)\right).
\]
Next consider sets defined by infinitely many congruence conditions. For each prime \(p\), let \(\Sigma_p\) be a set of isomorphism
classes of nondegenerate cubic rings over \(\mathbb{Z}_p\). 

\begin{definition}\label{acceptable Sigma}A collection
\(\Sigma=(\Sigma_p)_p\) is said to be \emph{acceptable} if, for all sufficiently
large primes \(p\), the set \(\Sigma_p\) contains every maximal cubic ring over
\(\mathbb{Z}_p\) that is not totally ramified.
\end{definition}
For a cubic order \(R\) over
\(\mathbb{Z}\), we write \(R\in\Sigma\), or say that \(R\) is a \(\Sigma\)-order,
if \(R\otimes\mathbb{Z}_p\in\Sigma_p\) for all primes \(p\). Fix such an acceptable collection \(\Sigma\), and let \(S=S(\Sigma)\) denote the
subset of absolutely irreducible elements \((A,B)\in V_{\mathbb{Z}}\) for which,
under Bhargava’s correspondence, the associated triple \((R,I,\delta)\) satisfies
\(R\in\Sigma\). Then one has that
\[
\begin{split}\lim_{X\to\infty}
\frac{N^{(i)}(S(\Sigma);X)}{X}
=&
\frac{1}{2n_i}
\cdot
\prod_p
\left(
\frac{p-1}{p}\cdot M_p(S(\Sigma))
\right),\\
=&\frac{2}{n_i}
\cdot
\prod_p
\left(
\frac{p-1}{p}
\sum_{R\in\Sigma_p}
\frac{1}{\op{Disc}_p(R)}
\cdot
\frac{1}{\lvert\op{Aut}(R)\rvert}
\right).
\end{split}
\]
cf. \cite[(12)]{BhargavaVarma}. Denote the resulting Euler product by
\[
c_\Sigma
:=
\prod_p
\left(
\frac{p-1}{p}
\sum_{R\in\Sigma_p}
\frac{1}{\op{Disc}_p(R)}
\cdot
\frac{1}{\lvert\op{Aut}(R)\rvert}
\right).
\]

\begin{lemma}\label{lemma 4.2}
The following assertions hold:
\begin{enumerate}
\item
The number of totally real \(\Sigma\)-orders \(\mathcal{O}\) with
\(\lvert\op{Disc}(\mathcal{O})\rvert<X\) is
\[
\frac{1}{12}\,c_\Sigma\,X + o(X).
\]
\item
The number of complex \(\Sigma\)-orders \(\mathcal{O}\) with
\(\lvert\op{Disc}(\mathcal{O})\rvert<X\) is
\[
\frac{1}{4}\,c_\Sigma\,X + o(X).
\]
\end{enumerate}
\end{lemma}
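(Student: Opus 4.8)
The plan is to count $\Sigma$-orders directly through the Delone--Faddeev correspondence rather than through the triple count $N^{(i)}(S(\Sigma);X)$: the latter records ideal-class information that cannot be stripped off order-by-order, whereas $c_\Sigma$ is, by its very definition $\prod_p\big(\tfrac{p-1}{p}\sum_{R\in\Sigma_p}(\op{Disc}_p(R)\,|\op{Aut}(R)|)^{-1}\big)$, manifestly a product of $p$-adic \emph{order} masses. By the Delone--Faddeev--Gan--Gross--Savin theorem applied over $\Z$ and over each $\Zp$ (both principal ideal domains), isomorphism classes of nondegenerate cubic rings are in discriminant-preserving bijection with $\op{GL}_2(\Z)$-orbits, respectively $\op{GL}_2(\Zp)$-orbits, of integral binary cubic forms. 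Under this dictionary a $\Sigma$-order corresponds to a form whose reduction at every $p$ lies in the $\op{GL}_2(\Zp)$-invariant locus cut out by $\Sigma_p$; restricting to orders in cubic fields corresponds to restricting to irreducible forms, with reducible forms contributing only a lower-order term by the standard Davenport--Heilbronn estimates. Finally, the order is totally real exactly when the associated form has positive discriminant and complex exactly when the discriminant is negative.

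With this reduction in place, I would invoke the asymptotic orbit count for $\op{GL}_2(\Z)$ acting on integral binary cubic forms of a fixed discriminant sign. For finitely many congruence conditions this is the Davenport--Heilbronn theorem with Bhargava's congruence refinement, whose main term factors as (archimedean density)$\times$(convergent Euler product of $p$-adic densities)$\times X$. The $p$-adic density at $p$ is computed by the standard mass formula and equals $\tfrac{p-1}{p}\sum_{R\in\Sigma_p}(\op{Disc}_p(R)\,|\op{Aut}(R)|)^{-1}$, the factor $\tfrac{p-1}{p}$ being the local normalization arising from the scaling ($\mathbb{G}_m$) part of the $\op{GL}_2$-measure. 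Because the sum runs over \emph{all} cubic rings $R\in\Sigma_p$, maximal or not, non-maximal orders are accounted for locally and no separate zeta-factor appears; assembling these factors over all $p$ yields precisely $c_\Sigma$, in agreement with the expression for $M_p(S(\Sigma))$ already recorded before the lemma.

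The archimedean constants $\tfrac{1}{12}$ and $\tfrac{1}{4}$ would then be read off from the volume of a fundamental domain for $\op{GL}_2^{\pm1}(\Z)$ acting on real binary cubic forms of fixed discriminant sign with $|\op{Disc}|<1$, weighted by the reciprocal of the order of the generic real stabilizer. This stabilizer is the automorphism group of the \'etale $\mathbb{R}$-algebra $\mathcal{O}\otimes_{\Z}\mathbb{R}$, which is $S_3$ of order $6$ in the totally real case ($\mathcal{O}\otimes\mathbb{R}\cong\mathbb{R}^3$) and $\Z/2\Z$ of order $2$ in the complex case ($\mathcal{O}\otimes\mathbb{R}\cong\mathbb{R}\times\mathbb{C}$). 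The two densities are therefore in the ratio $1:3$, consistent with $\tfrac{1/12}{1/4}=\tfrac13$, and the absolute normalization matches the classical Davenport--Heilbronn constants $\tfrac{1}{12}$ (totally real) and $\tfrac14$ (complex).

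The step I expect to be the main obstacle is the passage from finitely many congruence conditions to the infinitely many conditions defining an \emph{acceptable} family. Concretely, one must establish a uniform tail estimate bounding, independently of $X$, the number of forms of discriminant up to $X$ whose local behaviour at a large prime $p$ is constrained away from the generic (maximal, non-totally-ramified) case, and then sum these bounds to show the discrepancy is $o(X)$; this is exactly where the acceptability hypothesis of Definition \ref{acceptable Sigma} is used. All remaining ingredients---the Delone--Faddeev translation, the mass-formula identification of the Euler product with $c_\Sigma$, and the elementary archimedean volume computation---are routine by comparison.
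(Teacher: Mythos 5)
Your outline is correct and is essentially the argument behind the result that the paper itself does not reprove: its proof of this lemma is a one-line citation of \cite[Lemma 23]{BhargavaVarma}, whose proof runs exactly along your lines --- the Delone--Faddeev dictionary, the Davenport--Heilbronn count with Bhargava's congruence refinement and the local mass formula identifying the Euler product with $c_\Sigma$, the archimedean constants $\tfrac{1}{12}$ and $\tfrac14$ from the real stabilizers of orders $6$ and $2$, and a sieve to pass to the infinitely many conditions of an acceptable collection. The tail estimate you flag as the main obstacle is indeed the only nontrivial point, and it is supplied by the standard uniform bound that the number of cubic orders of discriminant up to $X$ that are non-maximal (or totally ramified) at a prime $p$ is $O(X/p^{2})$, which is summable over $p$.
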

\begin{proof}
    The above result is \cite[Lemma 23]{BhargavaVarma}.
\end{proof}
\par Now let's fix a measurable set $W$ in $\mathcal{S}_2$ whose boundary has measure $0$ and let $N^{(i)}_W(S;X) $ be the number of $G_{\Z}$-equivalence classes of absolutely irreducible elements $(A,B)$ in $S\cap V_{\Z}^{(i)}$ for which $|\op{Disc}(A,B)|<X$ and $\op{sh}(A,B)\in W$.

\begin{lemma}\label{finitely many conditions}
   When $S$ is defined by finitely many local conditions, \[
\lim_{X\to\infty} \frac{N^{(i)}_W(S;X)}{X}
=
\frac{1}{2n_i}\frac{\mu(W)}{\mu(\mathcal{S}_2)}
\prod_p \left(\frac{p-1}{p}\, M_p(S)\right).
\]
\end{lemma}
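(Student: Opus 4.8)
The plan is to combine the two ingredients already assembled in the excerpt: the congruence-conditioned count carrying no shape restriction, namely \cite[Theorem 19]{BhargavaVarma}, and the volume ratio computed in Theorem \ref{volume quotient}. The guiding principle is that the shape condition $\op{sh}(A,B)\in W$ and the congruence conditions defining $S$ act on disjoint data---the former only on the archimedean place, the latter only at the finite primes---so the two densities should simply multiply.

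First I would recall the geometry-of-numbers framework underlying \cite[Theorem 19]{BhargavaVarma}: one counts $G_{\mathbb{Z}}$-orbits of irreducible points of $S\cap V_{\mathbb{Z}}^{(i)}$ by averaging the number of lattice points of $S$ lying in translates of the thickened fundamental region $\mathcal{R}_X$, and the main term splits as a product of the archimedean volume $\operatorname{Vol}(\mathcal{R}_X)$ with the $p$-adic local masses $M_p(S)$. The crucial structural input is the identity $\op{sh}(\gamma\cdot v^{(i)})=g_2$ for $\gamma=(\lambda,g_3,g_2)\in G_{\mathbb{R}}'$ established earlier: the shape is a function of the $\op{GL}_2(\mathbb{R})$-coordinate alone, hence purely archimedean and completely insensitive to the congruence classes at the finite primes.

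Granting this, I would rerun the counting argument with $\mathcal{R}_X$ replaced by its shape-restricted subregion $\mathcal{R}_{X,W}$. Because the shape cut touches only the $\op{GL}_2(\mathbb{R})$-direction while the congruence conditions touch only the $p$-adic directions, the factorization of the main term is preserved: the finite-prime contribution remains $\prod_p\frac{p-1}{p}M_p(S)$ exactly as in \cite[Theorem 19]{BhargavaVarma}, while the archimedean volume is scaled by the factor $\operatorname{Vol}(\mathcal{R}_{1,W})/\operatorname{Vol}(\mathcal{R}_1)=\mu(W)/\mu(\mathcal{S}_2)$ supplied by Theorem \ref{volume quotient}. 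Assembling these yields
\[
\lim_{X\to\infty}\frac{N^{(i)}_W(S;X)}{X}=\frac{\mu(W)}{\mu(\mathcal{S}_2)}\cdot\frac{1}{2n_i}\prod_p\left(\frac{p-1}{p}M_p(S)\right),
\]
which is the asserted formula.

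The step I expect to be the main obstacle is the error analysis, specifically showing that imposing the shape condition does not disturb the $o(X)$ error term. This has two facets. One must control orbits whose shape lies within an $\varepsilon$-neighbourhood of $\partial W$; this is exactly where the hypothesis $\mu(\partial W)=0$ enters, guaranteeing that such orbits number $o(X)$ so that $\operatorname{Vol}(\mathcal{R}_{X,W})$ is genuinely the correct main term. One must also check that the cusp and reducibility estimates from Bhargava's work carry over unchanged; since those estimates concern the scaling and $\op{GL}_3$-directions, which are orthogonal to the shape cut, they apply verbatim, and the shape restriction only shrinks the relevant regions. Once these uniformity statements are in place, the factorization and the passage to the limit are routine.
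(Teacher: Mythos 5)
Your proposal is correct and follows essentially the same route as the paper: the count factors into the product of $p$-adic local densities and the archimedean volume of the (shape-restricted) fundamental region, and the shape condition only rescales the archimedean factor by $\operatorname{Vol}(\mathcal{R}_{1,W})/\operatorname{Vol}(\mathcal{R}_1)=\mu(W)/\mu(\mathcal{S}_2)$ via Theorem \ref{volume quotient}. The paper phrases this as computing the ratio $N^{(i)}_W(S;X)/N^{(i)}(S;X)$ using \cite[Lemma 9]{bhargavaharron} and then invoking \cite[Theorem 19]{BhargavaVarma} for the unrestricted count, which is the same factorization you describe; your added remarks on the $\mu(\partial W)=0$ hypothesis and the cusp estimates are consistent with the cited machinery.
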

\begin{proof}
    It suffices to show that 
    \[\lim_{X\rightarrow \infty} \frac{N^{(i)}_W(S;X)}{N^{(i)}(S;X)}=\frac{\mu(W)}{\mu(\mathcal{S}_2)}.\]
    Let $\mu_p(S)$ be the local density of $S$ at $p$. It follows from \cite[Lemma 9]{bhargavaharron} that
    \[\begin{split}
        \lim_{X\rightarrow \infty} \frac{N^{(i)}_W(S;X)}{N^{(i)}(S;X)}=\frac{\frac{1}{n_i}\prod_p \mu_p(S)\op{Vol}(\mathcal{R}_{X,W})}{\frac{1}{n_i}\prod_p \mu_p(S)\op{Vol}(\mathcal{R}_{X})}.
    \end{split}\]
    The result follows from Theorem \ref{volume quotient}, which implies that
    \[\frac{\op{Vol}(\mathcal{R}_{X,W})}{\op{Vol}(\mathcal{R}_{X})}=\frac{\op{Vol}(\mathcal{R}_{1,W})}{\op{Vol}(\mathcal{R}_{1})}=\frac{\mu(W)}{\mu(\mathcal{S}_2)}.\]
\end{proof}

\begin{lemma}\label{main lemma 1}
    Let $\Sigma$ be acceptable, then
    \[\frac{N^{(i)}_W(S(\Sigma);X)}{X}=\frac{c_\Sigma}{2n_i}\frac{\mu(W)}{\mu(\mathcal{S}_2)}.\]
\end{lemma}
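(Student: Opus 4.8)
The plan is to deduce the statement from the finite-level case (Lemma~\ref{finitely many conditions}) together with the unconditional asymptotic for $N^{(i)}(S(\Sigma);X)$ recorded above, via a standard truncation-and-tail argument. It suffices to establish that the shape condition equidistributes within the family $S(\Sigma)$, namely
\[
\lim_{X\to\infty}\frac{N^{(i)}_W(S(\Sigma);X)}{N^{(i)}(S(\Sigma);X)}=\frac{\mu(W)}{\mu(\mathcal{S}_2)},
\]
since multiplying this ratio by the unconditional limit $\lim_{X\to\infty}N^{(i)}(S(\Sigma);X)/X$ then yields the asserted value.

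First I would introduce truncations of $\Sigma$. For a parameter $Y$, let $\Sigma^{(Y)}=(\Sigma^{(Y)}_p)_p$ be defined by $\Sigma^{(Y)}_p=\Sigma_p$ for $p\le Y$ and by taking $\Sigma^{(Y)}_p$ to be the set of all nondegenerate cubic rings over $\Z_p$ for $p>Y$. Then $S(\Sigma^{(Y)})$ is cut out by congruence conditions at the primes $p\le Y$ only, so Lemma~\ref{finitely many conditions} applies and gives the equidistribution ratio $\mu(W)/\mu(\mathcal{S}_2)$ for the family $S(\Sigma^{(Y)})$ for every fixed $Y$. Since $\Sigma_p\subseteq\Sigma^{(Y)}_p$ for all $p$, we have $S(\Sigma)\subseteq S(\Sigma^{(Y)})$, whence $N^{(i)}_W(S(\Sigma);X)\le N^{(i)}_W(S(\Sigma^{(Y)});X)$, and similarly without the subscript $W$.

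Next I would control the discrepancy $S(\Sigma^{(Y)})\setminus S(\Sigma)$. By acceptability, an element of this difference corresponds to a triple $(R,I,\delta)$ whose ring $R$ fails to lie in $\Sigma_p$ for some $p>Y$; by Definition~\ref{acceptable Sigma} this forces $R\otimes\Z_p$ to be non-maximal or totally ramified at some such $p$. The key input is the uniformity estimate of Bhargava and Varma underlying \cite[Theorem~19]{BhargavaVarma} and Lemma~\ref{lemma 4.2}, which bounds the number of absolutely irreducible $(A,B)\in V_\Z^{(i)}$ with $|\op{Disc}(A,B)|<X$ that are non-maximal or totally ramified at some prime $p>Y$ by $O(X/Y)+o_Y(X)$. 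Crucially, imposing the extra constraint $\op{sh}(A,B)\in W$ only shrinks the set, so the same bound controls the $W$-restricted difference:
\[
0\le N^{(i)}_W(S(\Sigma^{(Y)});X)-N^{(i)}_W(S(\Sigma);X)\le O(X/Y)+o_Y(X).
\]

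Finally I would assemble the estimates. Dividing the previous display by $X$, letting $X\to\infty$ and using the finite-level equidistribution together with the convergence of the Euler products $c_{\Sigma^{(Y)}}\to c_\Sigma$ as $Y\to\infty$, I obtain that $N^{(i)}_W(S(\Sigma);X)/X$ differs from $\tfrac{\mu(W)}{\mu(\mathcal{S}_2)}$ times the unconditional constant by a quantity that is $O(1/Y)$; letting $Y\to\infty$ then gives the claim. The main obstacle is precisely the uniformity of the tail bound: one must know that the contribution of rings non-maximal at large primes is negligible uniformly in $X$, and that this bound is insensitive to the shape condition. The latter point is what makes the argument go through cleanly, since the shape restriction only decreases counts and therefore inherits the unconditional tail estimate verbatim.
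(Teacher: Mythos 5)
Your proposal is correct and follows essentially the same route as the paper: the paper's proof simply invokes Lemma~\ref{finitely many conditions} together with ``a standard sieve argument'' (citing \cite[Section~5]{bhargavaharron} and the proof of \cite[(12)]{BhargavaVarma}), and your truncation of $\Sigma$ at level $Y$, the tail bound via the Bhargava--Varma uniformity estimate, and the observation that the shape restriction only shrinks sets and hence inherits that bound verbatim, is precisely that sieve written out in full.
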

\begin{proof}
    The proof follows from Lemma \ref{finitely many conditions} in which finitely many local conditions are considered and a standard sieve argument (see \cite[section 5]{bhargavaharron} or the proof of \cite[(12)]{BhargavaVarma}).  
\end{proof}

\begin{lemma}\label{main lemma 2}
The following assertions hold:
\begin{enumerate}
\item
The number of totally real cubic \(\Sigma\)-orders \(\mathcal{O}\) with shape in $W$ and
\(\lvert\op{Disc}(\mathcal{O})\rvert<X\) is is asymptotic to
\[
\frac{c_\Sigma}{12}\frac{\mu(W)}{\mu(\mathcal{S}_2)}X.
\]
\item
The number of complex cubic \(\Sigma\)-orders \(\mathcal{O}\) with shape in $W$ and
\(\lvert\op{Disc}(\mathcal{O})\rvert<X\) is asymptotic to
\[
\frac{c_\Sigma}{4}\frac{\mu(W)}{\mu(\mathcal{S}_2)} X.
\]
\end{enumerate}
\end{lemma}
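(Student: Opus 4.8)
The plan is to obtain the two shape-refined order counts by combining the unrestricted asymptotics of Lemma \ref{lemma 4.2} with an equidistribution statement for the shapes of the $\Sigma$-orders themselves, in direct analogy with the way Lemma \ref{main lemma 1} refines the orbit count $N^{(i)}(S(\Sigma);X)$. I want to stress at the outset that this is a genuinely separate input from Lemma \ref{main lemma 1}: the latter refines a count of triples $(R,I,\delta)$, in which each order $R$ is weighted by the number of its order-two ideal classes, whereas here I need the \emph{unweighted} distribution of shapes over orders. The two cannot be substituted for one another without already knowing that the class-group statistics are insensitive to the shape, which is precisely the conclusion being built toward; so a direct appeal to Lemma \ref{main lemma 1} would be circular, and a dedicated argument is required.

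First I would recall that the count of totally real (resp.\ complex) cubic $\Sigma$-orders underlying Lemma \ref{lemma 4.2} is a geometry-of-numbers count in which the shape $\op{sh}(\cO)$ is an archimedean invariant, governed by the same $\op{GL}_2(\mathbb{R})$-action that occupies the third coordinate of $G_{\mathbb{R}}^{\prime}$ and that determines a point of $\mathcal{S}_2 \simeq \mathbb{H}/\op{GL}_2(\Z)$. The shape therefore decouples from the $p$-adic specifications defining $\Sigma$ and from the discriminant ordering. Running the analogue of the fundamental-domain computation behind Theorem \ref{volume quotient} in this parametrization---equivalently, invoking the equidistribution of shapes initiated by Terr \cite{Terr97} for cubic fields and propagated to acceptable families of orders by the methods of \cite{bhargavaharron}---shows that the proportion of such orders with shape lying in $W$ tends to $\mu(W)/\mu(\mathcal{S}_2)$.

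Concretely, I would first establish the statement for collections $\Sigma$ cut out by finitely many congruence conditions, where it reduces to the volume ratio of Theorem \ref{volume quotient} together with the density factorization used in Lemma \ref{finitely many conditions}: the defining integral splits as an archimedean piece carrying the shape restriction and a non-archimedean piece carrying the mass $c_\Sigma$, so that the shape restriction contributes exactly the factor $\mu(W)/\mu(\mathcal{S}_2)$. One then passes to a general acceptable $\Sigma$ by the standard sieve (cf.\ \cite[Section~5]{bhargavaharron} and the proof of \cite[(12)]{BhargavaVarma}), the tail over large primes being controlled by the convergence of $c_\Sigma$. Multiplying the asymptotics of Lemma \ref{lemma 4.2} by this factor then yields $\tfrac{c_\Sigma}{12}\,\tfrac{\mu(W)}{\mu(\mathcal{S}_2)}\,X$ in the totally real case and $\tfrac{c_\Sigma}{4}\,\tfrac{\mu(W)}{\mu(\mathcal{S}_2)}\,X$ in the complex case.

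The step I expect to be the main obstacle is the uniformity of this decoupling: one must ensure that imposing $\op{sh}(\cO)\in W$ interacts neither with the infinitely many local conditions defining an acceptable $\Sigma$ nor with the truncation used in the sieve, so that the archimedean density $\mu(W)/\mu(\mathcal{S}_2)$ can be extracted before the Euler product $c_\Sigma$ is assembled and still survives the limit $X\to\infty$. This is exactly the point where the positive-measure and measure-zero-boundary hypotheses on $W$ are used: the vanishing boundary guarantees that orders whose shapes cluster near $\partial W$ are negligible, so that the shape-restriction functional is continuous on the relevant space of lattices and the error terms remain $o(X)$ uniformly across the sieve.
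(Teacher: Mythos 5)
Your proposal matches the paper's proof in substance: the paper likewise establishes the lemma by showing (via the results of \cite[Section 5]{bhargavaharron}, i.e.\ the unweighted shape-equidistribution of cubic $\Sigma$-orders, with the limiting proportion given by the volume ratio $\op{Vol}(\mathcal{R}_{1,W})/\op{Vol}(\mathcal{R}_{1})=\mu(W)/\mu(\mathcal{S}_2)$) that the shape restriction contributes exactly the factor $\mu(W)/\mu(\mathcal{S}_2)$, and then multiplies the asymptotics of Lemma \ref{lemma 4.2}. Your explicit remark that this equidistribution input is logically independent of Lemma \ref{main lemma 1} (which counts triples $(R,I,\delta)$ weighted by $2$-torsion data) is correct and is a point the paper leaves implicit.
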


\begin{proof}
    We prove part (1), the proof of (2) is similar. It follows the results in \cite[section 5]{bhargavaharron} that 
    \[\lim_{X\rightarrow \infty} \frac{\# \left\{\cO\mid \cO\text{ is a totally real cubic }\Sigma\text{-order, }\op{sh}(\cO)\in W\text{, } |\op{Disc}(\cO)|<X\right\}}{\# \left\{\cO\mid \cO\text{ is a totally real cubic }\Sigma\text{-order, } |\op{Disc}(\cO)|<X\right\}}=\frac{\op{Vol}(\mathcal{R}_{1,W})}{\op{Vol}(\mathcal{R}_{1})}=\frac{\mu(W)}{\mu(\mathcal{S}_2)}.\]
    Part (1) of Lemma \ref{lemma 4.2} asserts that 
    \[\# \left\{\cO\mid \cO\text{ is a totally real cubic }\Sigma\text{-order, } |\op{Disc}(\cO)|<X\right\}=\frac{1}{12}\,c_\Sigma\,X + o(X)\] from which the result follows.
\end{proof}

\begin{theorem}\label{main thm of article}
    For positive discriminant,
\begin{equation}
\label{eq:avg-positive}
\lim_{X\rightarrow \infty} \frac{
\sum_{\substack{R\in\Sigma \\ 0<\op{Disc}(R)<X\\
\op{sh}(R)\in W}}
\bigl(\lvert\op{Cl}_2(R)\rvert - \tfrac14 \lvert\mathcal{I}_2(R)\rvert\bigr)
}{
\sum_{\substack{R\in\Sigma \\ 0<\op{Disc}(R)<X\\
\op{sh}(R)\in W}} 1
}
=1.
\end{equation}
Similarly, for negative discriminant,
\begin{equation}
\label{eq:avg-negative}
\lim_{X\rightarrow \infty} \frac{
\sum_{\substack{R\in\Sigma \\ -X<\op{Disc}(R)<0 \\ \op{sh}(R)\in W}}
\bigl(\lvert\op{Cl}_2(R)\rvert - \tfrac12 \lvert\mathcal{I}_2(R)\rvert\bigr)
}{
\sum_{\substack{R\in\Sigma \\ -X<\op{Disc}(R)<0 \\ \op{sh}(R)\in W}} 1
}
=1.
\end{equation}
\noindent Finally, restricting to totally real rings, one has that:
\begin{equation}
\label{eq:avg-totally-real}
\lim_{X\rightarrow \infty} \frac{
\sum_{\substack{R\in\Sigma \\ 0<\op{Disc}(R)<X\\\op{sh}(R)\in W}}
\lvert\op{Cl}_2^{+}(R)\rvert
}{
\sum_{\substack{R\in\Sigma \\ 0<\op{Disc}(R)<X \\ \op{sh}(R)\in W}} 1
}
=1.
\end{equation}
\end{theorem}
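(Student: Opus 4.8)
The plan is to count the relevant $2$-torsion data by combining Bhargava's parametrization of triples $(R,I,\delta)$ with the shape-refined orbit counts established in Lemma \ref{main lemma 1}, and then to divide by the order counts from Lemma \ref{main lemma 2}. The key observation is that for a fixed $S_3$-cubic order $R$, the absolutely irreducible $G_{\Z}$-orbits in $V_{\Z}^{(i)}$ mapping to $R$ enumerate, via the exact sequence for $H(R)$, precisely the group $H(R)$ together with the ideal-class data measured by $|\mathcal{I}_2(R)|$. Concretely, I would first recall from \cite[Section 4]{BhargavaVarma} how the total orbit count $N^{(i)}(S;X)$ decomposes as a sum over orders $R$ of a weighted count involving $|H(R)|$ and the number of $2$-torsion ideal classes that fail to be projective; this failure is exactly what produces the term $\frac14|\mathcal{I}_2(R)|$ (resp.\ $\frac12|\mathcal{I}_2(R)|$) in the totally real (resp.\ complex) case, via the relations $|H(R)|=4|\op{Cl}_2(R)|$ and $|H(R)|=2|\op{Cl}_2(R)|$ recorded at the end of Section~2.

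The central step is to run this decomposition with the shape constraint $\op{sh}(R)\in W$ imposed. Since the shape of the cubic ring attached to $(A,B)$ depends only on the $\op{GL}_2$-component of the $G_{\Z}$-action and is constant on $G_{\Z}$-orbits, imposing $\op{sh}(A,B)\in W$ is compatible with Bhargava's orbit-to-triple dictionary: every orbit mapping to a fixed $R$ inherits the same shape $\op{sh}(R)$. Therefore the shape-refined count $N^{(i)}_W(S(\Sigma);X)$ decomposes over $\Sigma$-orders $R$ with $\op{sh}(R)\in W$ by exactly the same local weights as the unrefined count, and Lemma \ref{main lemma 1} shows that passing to the shape-restricted count simply multiplies the asymptotic main term by the factor $\mu(W)/\mu(\mathcal{S}_2)$. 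I would then assemble the numerators: for positive discriminant the two archimedean orbit types $i=0,2$ contribute (with their respective $n_i=24,8$ and the sign of $\delta$) so that summing the orbit counts recovers $\sum_R(|\op{Cl}_2(R)|-\tfrac14|\mathcal{I}_2(R)|)$ up to the universal constant $c_\Sigma\,\mu(W)/\mu(\mathcal{S}_2)$; likewise for $i=1$ in the complex case and for the totally positive subgroup $H^+(R)$ with $|H^+(R)|=|\op{Cl}_2^+(R)|$ in the narrow case.

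The arithmetic heart of the argument is the same identity Bhargava and Varma use: the count of the three archimedean orbit-flavors, weighted by $1/(2n_i)$, reproduces exactly the average of $|\op{Cl}_2|-\tfrac14|\mathcal{I}_2|$ (and its analogues) because the contributions of non-projective triples cancel against the $|\mathcal{I}_2(R)|$ correction. With the numerator asymptotic to $\tfrac{1}{12}c_\Sigma\,\tfrac{\mu(W)}{\mu(\mathcal{S}_2)}X$ (totally real) or $\tfrac14 c_\Sigma\,\tfrac{\mu(W)}{\mu(\mathcal{S}_2)}X$ (complex), and the denominator given by Lemma \ref{main lemma 2} with the identical constants, the ratio is $1$ in each case and the factor $\mu(W)/\mu(\mathcal{S}_2)$ cancels; this cancellation is the mechanism by which the shape restriction leaves the average unchanged. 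The main obstacle I anticipate is bookkeeping: I must verify that the shape-refined sieve of Lemma \ref{main lemma 1} applies uniformly across the finitely-many-conditions approximations to $S(\Sigma)$ and that the error terms remain $o(X)$ when the boundary of $W$ has measure zero—this is where the hypothesis on $\partial W$ is essential, ensuring that the proportion of orbits whose shape lies near $\partial W$ is negligible and does not disturb the equidistribution supplied by Theorem \ref{volume quotient}.
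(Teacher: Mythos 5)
Your proposal is correct and follows essentially the same route as the paper: both arguments rest on the Bhargava--Varma identities expressing the shape-restricted orbit counts $N^{(i)}_W(S(\Sigma);X)$ as sums of $4|\op{Cl}_2(R)|-|\mathcal{I}_2(R)|$, $2|\op{Cl}_2(R)|-|\mathcal{I}_2(R)|$, and $|\op{Cl}_2^{+}(R)|-|\mathcal{I}_2(R)|$ over the relevant orders, then divide by the order counts of Lemma \ref{main lemma 2} so that the factor $\mu(W)/\mu(\mathcal{S}_2)$ cancels exactly as you describe. The only cosmetic difference is that you re-derive the orbit-to-order decomposition from the exact sequence for $H(R)$ and the projectivity discussion, whereas the paper simply imports the corresponding identities from \cite[(17)]{BhargavaVarma}.
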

\begin{proof}
We argue exactly as in \cite[(17)]{BhargavaVarma}, replacing their global set
$S(\Sigma)$ by the present set defined by the shape condition
$\op{sh}(R)\in W$, and invoking Lemmas~\ref{main lemma 1} and
\ref{main lemma 2} in place of their counting theorems. Concretely, one has
\[
N^{(0)}(S;X)+N^{(2)}(S;X)
=
\sum_{\substack{R\in\Sigma\\0<\op{Disc}(R)<X\\ \op{sh}(R)\in W}}
\bigl(4|\op{Cl}_2(R)|-|\mathcal{I}_2(R)|\bigr),
\]
\[
N^{(1)}(S;X)
=
\sum_{\substack{R\in\Sigma\\-X<\op{Disc}(R)<0\\ \op{sh}(R)\in W}}
\bigl(2|\op{Cl}_2(R)|-|\mathcal{I}_2(R)|\bigr),
\]
and
\[
N^{(0)}(S;X)
=
\sum_{\substack{R\in\Sigma\\0<\op{Disc}(R)<X\\ \op{sh}(R)\in W}}
\bigl(|\op{Cl}_2^{+}(R)|-|\mathcal{I}_2(R)|\bigr).
\]
\noindent Dividing each identity by
\[
\sum_{\substack{R\in\Sigma\\ \pm\op{Disc}(R)<X\\ \op{sh}(R)\in W}} 1
\]
and letting $X\to\infty$, the main counting lemmas imply that the left-hand sides
tend to~$1$. Rearranging the resulting expressions yields
\eqref{eq:avg-positive} and \eqref{eq:avg-negative}. Finally, restricting to
totally real rings and using that
$|H^{+}(R)|=|\op{Cl}_2^{+}(R)|$ for such $R$, the same argument applied to
$N^{(0)}(S;X)$ gives \eqref{eq:avg-totally-real}. This completes the proof.
\end{proof}

\begin{proof}[Proof of Theorem \ref{thm a}]
    In order to prove Theorem \ref{thm a}, let $\Sigma$ be the acceptable collection of maximal cubic orders. Theorem 17 and Corollary 18 of \cite{BhargavaVarma} together imply that for any maximal cubic order, $|\mathcal{I}_2(R)|=1$. The result then immediately follows from Theorem \ref{main thm of article}.
\end{proof}

\begin{proof}[Proof of Theorem \ref{thm c}]
    The proof follows immediately from Theorem \ref{thm a} and the proof of \cite[Corollary 3]{BhargavaVarma}.
\end{proof}

\bibliographystyle{alpha}
\bibliography{references}

\end{document}